\newcommand{\tpmod}[1]{{\@displayfalse\pmod{#1}}}
\newtheorem{thm}{Theorem}[section]
\newtheorem{lemma}[thm]{Lemma}
\newtheorem{cor}[thm]{Corollary}
\theoremstyle{remark}
\theoremstyle{definition}
\newtheorem{rem}[thm]{Remark}
\theoremstyle{THM}
\newcommand{\V}{{\mathcal V}}
\newcommand{\U}{{\mathcal U}}
\newcommand{\T}{{\mathcal T}}
\newcommand{\SSS}{{\mathcal S}}
\newcommand{\RR}{{\mathcal R}}
\newcommand{\FF}{{\mathcal F}}
\newcommand{\Gal}{{\mbox{{\rm{Gal}}}}}
\newcommand{\rad}{{\mbox{{\rm{rad}}}}}
\newcommand{\mmod}[1]{\ \mathrm{mod}\enspace #1}
\newcommand{\Z}{{\mathbb Z}}
\newcommand{\Q}{{\mathbb Q}}
\newcommand{\R}{{\mathbb R}}
\newcommand{\F}{{\mathbb F}}
\newcommand{\abs}[1]{\left|{#1}\right|}
\def\red#1 {\textcolor{red}{#1 }}
\def\blue#1 {\textcolor{blue}{#1 }}
\numberwithin{equation}{section}
\begin{document}

\title[Monogenic even sextic trinomials]{Monogenic even sextic trinomials\\ and their Galois groups}


\author{Lenny Jones}
\address{Professor Emeritus, Department of Mathematics, Shippensburg University, Shippensburg, Pennsylvania 17257, USA}
\email[Lenny~Jones]{doctorlennyjones@gmail.com}

\date{\today}

\begin{abstract}
    Let $f(x)=x^6+Ax^{2k}+B\in {\mathbb Z}[x]$, with $A\ne 0$ and $k\in \{1,2\}$. We say that $f(x)$ is {\em monogenic} if $f(x)$ is irreducible over ${\mathbb Q}$ and
    $\{1,\theta,\theta^2,\theta^3,\theta^4,\theta^{5}\}$
    is a basis for the ring of integers of ${\mathbb Q}(\theta)$, where $f(\theta)=0$.

    For each value of $k$ and each possible Galois group $G$ of $f(x)$ over ${\mathbb Q}$, we use a theorem of Jakhar, Khanduja and Sangwan to give explicit descriptions of all monogenic trinomials $f(x)$ having Galois group $G$. We also determine when these descriptions provide infinitely many such trinomials, and we investigate when these trinomials generate distinct sextic fields. 
        
    These results extend recent work on monogenic power-compositional sextic trinomials of the form $g(x^3)$ to the situation $g(x^2)$, and thereby complete the characterization, in terms of their Galois groups, of monogenic power-compositional sextic trinomials. 
\end{abstract}

\subjclass[2020]{Primary 11R09, 11R04; Secondary 11R32, 11R21}
\keywords{monogenic, even sextic, trinomial, Galois, power-compositional}

\maketitle
\section{Introduction}\label{Section:Intro}
 Let
 \begin{equation}\label{Eq:f}
 f(x)=x^6+Ax^{2k}+B\in {\mathbb Z}[x], \quad \mbox{where $A\ne 0$ and $k\in \{1,2\}$.}
 \end{equation} If $f(x)$ is irreducible over ${\mathbb Q}$, then we define $f(x)$ to be {\em monogenic} if \[{\mathcal B}=\{1,\theta,\theta^2,\theta^3,\theta^4,\theta^{5}\}\] is a basis
 for the ring of integers of ${\mathbb Q}(\theta)$, where $f(\theta)=0$. Such a basis ${\mathcal B}$ is sometimes referred to in the literature as a {\em power basis}. We say that two such monogenic sextic trinomials $f_1(x)\ne f_2(x)$ having isomorphic Galois groups over $\Q$ are {\em distinct} if $K_1\ne K_2$, where $K_i=\Q(\theta_i)$ with $f_i(\theta_i)=0$.

 The goal of this article is to provide a characterization of the monogenic sextic trinomials $f(x)$ described in \eqref{Eq:f} in terms of their Galois groups over $\Q$, which we denote $\Gal(f)$. More explicitly, for each possible Galois group $G$, 
 we use a theorem of Jakhar, Khanduja and Sangwan \cite{JKS2} to derive conditions that allow us to present an explicit description of all such monogenic sextic trinomials $f(x)$ in \eqref{Eq:f} with $\Gal(f)\simeq G$. 
  We also investigate when these trinomials generate distinct sextic fields.

   We point out that similar research involving the monogenicity and/or Galois groups of trinomials of various degrees has been conducted by many authors  \cite{AJ,AL,BS,HJAA,HJBAMS,HJActa,JonesQuarticsBAMS,JonesRam,JonesNYJM,JonesJAA,JonesAA,JonesRecipQuartics,JonesEvenSextics,JW,MNSU,S1,S2,Voutier}. The results in this article extend previous work  
   on sextic trinomials of the form $g(x^3)=x^6+Ax^3+B$ \cite{HJActa,JonesRam,JonesNYJM} to sextic trinomials of the form $g(x^2)$, and thereby  complete the characterization, in terms of their Galois groups, of monogenic power-compositional sextic trinomials. 
   In particular, in this article, for $f(x)$ as defined in \eqref{Eq:f}, we see that
   \begin{equation}\label{Eq:g}
   f(x)=g(x^2), \quad \mbox{where} \quad g(x)=x^3+Ax^k+B.
   \end{equation}

 We make some additional notational remarks. Let $m,n\in \Z$, with $m\ge 2$. We let the notation ``$n \mmod{m}$" denote the unique integer $z\in \{0,1,2,\ldots,m-1\}$ such that $n\equiv z \pmod{m}$. That is, $n \mmod{m}=z$. We let $\nu_p(n)$ be the $p$-adic valuation of $n$, and for $n>0$, we let $\rad(n)$, called the ``radical" of $n$, be the product of all distinct prime divisors of $n$. We let $C_n$ denote the cyclic group of order $n$,
  $A_n$ denote the alternating group of order $n!/2$ and $S_n$ denote the symmetric group of order $n!$. Furthermore, $S_4^{+}$ denotes the symmetric group $S_4$ as a subgroup of $S_6$ embedded in $A_6$, and $S_4^{-}$ denotes the group $S_4$ as a subgroup of $S_6$ that is not contained in $A_6$.

  Using the familiar group names as previously described, we provide in Table \ref{T1} the eight possible Galois groups over $\Q$ for a general irreducible even sextic polynomial \cite[Table 2]{AJ}. For the convenience of the reader, we include the ``T"-notation, 6Tn, \cite{BM} for these groups in Table \ref{T1}, since Maple also gives the T-notation when computing the Galois group.
     \begin{table}[h]
 \begin{center}
\begin{tabular}{c|cccccccc}
  Familiar Name & $C_6$ & $S_3$ & $C_2\times S_3$ & $A_4$ & $C_2\times A_4$ & $S_4^{+}$ & $S_4^{-}$ & $C_2\times S_4$\\ [.25em]
 T-notation & 6T1 & 6T2 & 6T3 & 6T4 & 6T6 & 6T7 & 6T8 & 6T11 \\[.25em]
 \end{tabular}
\end{center}
\caption{Possible Galois groups for even sextic polynomials}
 \label{T1}
\end{table}

Our main theorem is:
\begin{thm}\label{Thm:Main}\text{}
Let $A,B,k\in \Z$, with $AB\ne 0$ and $k\in \{1,2\}$. Let 
\[f(x)=x^6+Ax^{2k}+B \quad \mbox{and} \quad  \delta=4A^3+27B^{3-k}.\] Suppose that $f(x)$ is irreducible over $\Q$. Then
$f(x)$ is monogenic with $\Gal(f)\simeq$ 
\begin{enumerate}
  \item \label{M:I1} $C_6$ never occurs; 
  \item \label{M:I2} $S_3$ never occurs;
 \item \label{M:I3} $C_2\times S_3$ if and only if $k=2$ with $(A,B)\in \{(-2,2), \ (2,-2)\}$; 
  \item \label{M:I4} $A_4$ if and only if $(k,A,B)\in \{(1,-3,-1),\ (2,-3,-1)\}$; 
   \item \label{M:I5} $C_2\times A_4$ if and only if 
   \[2\nmid AB,\quad B \ \mbox{is squarefree,}\quad B\ne -1, 
   \quad \rad(\abs{\delta})\mid A, \quad \mbox{such that}\] 
   \begin{enumerate}
     \item $k=1$  with $f(x)\in \FF_1$, where 
     \[\FF_1=\{f(x): 3\nmid A \ \mbox{or} \ (A \mmod{9}, \ B\mmod{9})\in \{(6,1),(6,4),(6,5),(6,8)\}\},\]
      \item $k=2$ with $(A,B)\in \{(-3,1),(-3,3),(3,-3)\}$;\\     
   \end{enumerate}
      \item \label{M:I6} $S_4^{+}$ if and only if 
    $f(x)\in \FF_2$, where 
     \[\FF_2=\{f(x): B=-1, \  A\ne (-1)^k3, \ 4\nmid A, \ 9\nmid A, \ \delta/3^{\nu_3(\delta)} \ \mbox{is squarefree}\};\]
     \item \label{M:I7} $S_4^{-}$ if and only if 
     \begin{enumerate}
     \item $k=1$  with $(A,B)=(-9,-6)$,
     \item $k=2$ with  $f(x)\in \FF_3$, where 
      \[\FF_3=\{f(x): 3\mid A, \ 4\nmid A, \ B\ne -1, \ B=3-4(A/3)^3 \ \mbox{is squarefree}\};\]
   \end{enumerate}
   \item \label{M:I8} $C_2\times S_4$ if and only if 
   \begin{gather*}
   B \ \mbox{and} \quad \delta/\prod_{p\mid 2AB}p^{\nu_p(\delta)} \quad \mbox{are squarefree,}  \quad B\ne -1,\\ 
   \quad -B^{k-1}\delta \quad \mbox{is not a square,}\\
   (A \mmod{4}, \ B\mmod{4})=(3,2) \quad \mbox{if} \ 2\nmid A \ \mbox{and} \ 2\mid B,\\
    (A \mmod{4}, \ B\mmod{4})\in \{(0,1), \ (2,3)\} \quad \mbox{if} \ 2\mid A \ \mbox{and} \ 2\nmid B,
   \end{gather*}  
   and additionally, when
   \begin{enumerate}
   \item $k=1$
   \begin{gather*}
    (A \mmod{9}, \ B\mmod{9})\in \RR_1 \quad \mbox{if} \ 3\mid A \ \mbox{and} \ 3\nmid B;
   \end{gather*}  
   \item $k=2$
   \begin{gather*}
    (A,B)\not \in \{(-2,2), (2,-2)\}, \quad f(x)\not \in \FF_3 \quad \mbox{and}\\
    (A \mmod{9}, \ B\mmod{9})\in \RR_2 \quad \mbox{if} \ 3\mid A \ \mbox{and} \ 3\nmid B,
   \end{gather*}  
   \end{enumerate}
  $\begin{array}{cl}
   \mbox{where} \ \RR_k=&\hspace*{-.1in}\{(0,2),\ (0,4),\ (0,5),\ (0,7),\\
    &\qquad (3,1),\ (3,2),\ (3,8),\ (3,10-3k),\\
     & \qquad \qquad (6,1),\ (6,5),\ (6,8),\ (6,3k+1)\};
     \end{array}$
    \end{enumerate} 
\end{thm}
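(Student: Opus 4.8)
The plan is to decouple the two requirements in each biconditional --- that $\Gal(f)\simeq G$ and that $f$ be monogenic --- to analyze each separately as a set of arithmetic conditions on $(A,B,k)$, and then to intersect. Throughout I would lean on the power-compositional identity $f(x)=g(x^2)$ with $g(x)=x^3+Ax^k+B$ from \eqref{Eq:g}, which transfers information between the sextic $f$ and the cubic $g$.

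\smallskip
\emph{Galois groups.} Since $f$ is irreducible, so is $g$, whence $\Gal(g)\in\{C_3,S_3\}$. Writing the roots of $f$ as $\pm\alpha_1,\pm\alpha_2,\pm\alpha_3$ with $\alpha_i^2=\rho_i$ a root of $g$, restriction to the splitting field of $g$ gives a surjection $\Gal(f)\twoheadrightarrow\Gal(g)$ whose kernel $N$ embeds, via the sign-flips $\alpha_i\mapsto\pm\alpha_i$, as a $\Gal(g)$-stable subgroup of $(\Z/2\Z)^3$. The only such subgroups are $0$, the diagonal $\langle(1,1,1)\rangle$, the sum-zero plane, and all of $(\Z/2\Z)^3$; pairing these against $\Gal(g)$ reproduces precisely the eight groups of Table~\ref{T1} (for instance $\Gal(g)=C_3$ with $N$ the sum-zero plane gives $A_4$, while $\Gal(g)=S_3$ with $N$ the full group gives $C_2\times S_4$). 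I would then read off $\Gal(g)$ from the square class of $\operatorname{disc}(g)=-B^{k-1}\delta$, and read off $N$ from Kummer theory over the splitting field of $g$: the governing invariants are whether $\alpha_1\alpha_2\alpha_3=\sqrt{-B}$ lies in that field (controlling the all-flip, hence the diagonal summand) and the square classes of the products $\rho_i\rho_j$ in the splitting field of $g$ (controlling the sum-zero plane). Because $\operatorname{disc}(f)=-2^6B^{2k-1}\delta^2$, the sextic discriminant is a square exactly when $-B$ is a rational square; this is the extra invariant that separates $S_4^{+}\subseteq A_6$ from $S_4^{-}$, and it explains why $B=-1$ is forced in the $S_4^{+}$ and $A_4$ rows. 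Collecting these gives the group-theoretic half of each biconditional as explicit square-class and congruence conditions on $(A,B)$.

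\smallskip
\emph{Monogenicity.} Here I would apply the Jakhar--Khanduja--Sangwan criterion \cite{JKS2} verbatim to the trinomial $x^6+Ax^{2k}+B$, for which $\gcd(6,2k)=2$. Using $\operatorname{disc}(f)=-2^6B^{2k-1}\delta^2$, the criterion reduces to one local test at each prime dividing $\operatorname{disc}(f)$: away from $2$, $3$ and the primes dividing $AB$ the tests amount to requiring the relevant cofactor of $\delta$ to be squarefree (and force conditions such as $B$ squarefree and $\rad(\abs{\delta})\mid A$), while the primes $2$ and $3$ require a separate, more delicate analysis that produces exactly the congruences modulo $4$ and modulo $9$ encoded in the sets $\FF_i$ and $\RR_k$. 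This yields a self-contained arithmetic characterization of monogenicity.

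\smallskip
\emph{Synthesis.} Intersecting the two analyses for each $G$ gives the stated lists. In several rows the group condition pins a square class so rigidly --- for example, $-B$ a square together with $B$ squarefree forces $B=-1$, after which $\operatorname{disc}(g)$ being a square becomes a Mordell-type equation $y^2=4\abs{A}^3-27$ --- that only finitely many $(A,B)$ survive the monogenicity constraints; this is the mechanism behind the finite explicit lists for $A_4$, $C_2\times S_3$, and the exceptional subcases of $C_2\times A_4$ and $S_4^{-}$. The two ``never occurs'' assertions for $C_6$ and $S_3$ I would prove by showing that the corresponding group condition ($\Gal(g)=C_3$ with $N$ the diagonal, respectively $N=0$) is arithmetically incompatible with the JKS test. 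I expect the main obstacle to be twofold: first, the complete $2$-adic and $3$-adic bookkeeping in JKS needed to justify the sets $\FF_1,\RR_1,\RR_2$ and to keep the $k=1$ and $k=2$ cases (where $\delta=4A^3+27B^{3-k}$ changes shape) in step; and second, the Kummer-theoretic separation of the three nonzero stable subgroups $N$, which is most intricate in the largest group $C_2\times S_4$, where every squarefreeness, square-class and congruence condition accumulates at once and the competing cases ($\FF_3$ and the pairs $(\pm2,\mp2)$ when $k=2$) must be explicitly excluded.
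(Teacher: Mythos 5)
Your overall architecture does coincide with the paper's: the paper likewise decouples the Galois-group condition (Theorem \ref{Thm:AJTri}, adapted from Awtrey--Jakes, which encodes exactly your square-class and kernel data) from monogenicity (Theorem \ref{Thm:General}, the JKS reduction that produces the mod-$4$ conditions and the mod-$9$ sets $\RR_k$), and then intersects the two, using integral points on curves $Y^2=X^3\pm 2^a3^b$ for several of the finite lists. Your Kummer-theoretic rederivation of the eight candidate groups is sound, as is the observation that $\Delta(f)$ is a rational square precisely when $-B$ is, which separates $S_4^{+}$ from $S_4^{-}$.

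The genuine gap lies in every case governed by a \emph{small} kernel $N$, equivalently by the \emph{reducibility} of $h(x)=x^6+(-1)^kAB^{k-1}x^{6-2k}-B^2$: items \eqref{M:I1}--\eqref{M:I3}, and the converse halves of items \eqref{M:I5}--\eqref{M:I8}, where one must prove $h(x)$ irreducible. Your synthesis claims the finite list for $C_2\times S_3$ arises because ``the group condition pins a square class so rigidly'' that a Mordell-type equation results; but in the $C_2\times S_3$ row \emph{none} of $-B$, $\Delta(g)$, $-B\Delta(g)$ is a square, so there is no square class to pin and no Mordell equation to solve. What you need instead is a way to convert the Kummer condition ``$N$ is the diagonal'' (your square classes of the $\rho_i\rho_j$ in the splitting field of $g$) into checkable Diophantine constraints on $(A,B)$, and your sketch contains no such mechanism. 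The paper's device is Lemma \ref{Lem:HJSextic}: if $h$ is reducible, the explicit quartic $M(x)$ in \eqref{M} has an integer zero $\mu$, and the resulting parametrization does all the work --- for $k=1$ one gets $A=\mu^2\pm2\sqrt{2B\mu}$, hence $B\mid\mu$ and $\mu=2^{2r-1}m^2B$, whence $\delta=B^2d^2(\cdots)$ with $d=2^{3r-1}m^3B\pm1$; condition \eqref{C5} of Theorem \ref{Thm:General} then forces $\abs{d}=3^z$, and the mod-$9$ sets $\RR_k$ eliminate every surviving pair, yielding exactly $\{(\pm2,\mp2)\}$ for $k=2$ and nothing for $k=1$. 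The same device underlies the exclusion of $C_6$: Lemma \ref{Lem:h reducible} shows that $M(\mu)=0$ forces $\Delta(g)<0$, so $C_6$ is impossible for \emph{every} irreducible $f$, monogenic or not; your plan to rule out $C_6$ by incompatibility with the JKS test would, in any case, still require this missing parametrization to get arithmetic traction on ``$h$ reducible.'' Without an analogue of the $M(x)$ lemma, the biconditionals in items \eqref{M:I1}--\eqref{M:I3} and the irreducibility of $h$ needed to confirm the families $\FF_1$, $\FF_2$, $\FF_3$ and the $C_2\times S_4$ conditions cannot be closed, so the proposal as it stands does not yield the theorem.
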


Observe that the conditions given in Theorem \ref{Thm:General} for a monogenic trinomial $f(x)$ with $\Gal(f)\simeq C_2\times S_4$ do not include a parametric description of all such monogenic trinomials for either value of $k\in \{1,2\}$. However, in the proof of the following corollary, we 
construct infinite parametric families from these conditions, such that the members of these families generate distinct sextic fields. Furthermore, we show that each of the parametric families $\FF_1$, $\FF_2$ and $\FF_3$ given in Theorem \ref{Thm:General}, for the respective Galois groups $C_2\times A_4$, $S_4^{+}$ and $S_4^{-}$, contains an infinite subfamily such that all elements of the subfamily generate distinct sextic fields. More formally, we state the following corollary of Theorem \ref{Thm:Main}. 
\begin{cor}\label{Cor:Main} \text{}
\begin{enumerate}
  \item \label{I1:Cor} The families $\FF_1$, $\FF_2$ and $\FF_3$ of monogenic trinomials given in Theorem \ref{Thm:Main} are infinite, and each of these families contains at least one infinite single-parameter subfamily such that all elements of the subfamily generate distinct sextic fields. 
  \item \label{I2:Cor} For each value of $k\in \{1,2\}$, there exists an infinite single-parameter family of trinomials $f(x)$ such that $f(x)$ is monogenic with $\Gal(f)\simeq C_2\times S_4$, and no two elements in a family generate the same sextic field.      
\end{enumerate}
\end{cor}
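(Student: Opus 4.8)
The plan is to prove both assertions by the same two-step template: first construct, inside each prescribed family, an explicit infinite one-parameter subfamily all of whose members satisfy the hypotheses of Theorem~\ref{Thm:Main} for the relevant group, and then separate the resulting fields by a discriminant-growth argument. For the second step I would use that monogenicity forces $\operatorname{disc}(f)=d_K$ (the index $[\mathcal{O}_K:\Z[\theta]]$ is $1$), so along any one-parameter subfamily $\operatorname{disc}(f)$ is a non-constant polynomial in the parameter; hence it takes each integer value only finitely often, each sextic field $K$ occurs for only finitely many parameter values, and passing to the subfamily on which $|\operatorname{disc}(f)|$ is eventually strictly monotone yields pairwise distinct fields. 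This disposes of distinctness uniformly, so the real work is the constructions.

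For $\FF_1$ (the group $C_2\times A_4$, $k=1$) the obstructive condition is $\rad(|\delta|)\mid A$ with $\delta=4A^3+27B^2$. I would force it by a parametrization that makes $\delta$ a square multiple of $A^2$: writing $A=3a$ and $B=a\beta$ and imposing $4a+\beta^2=-3$, i.e. $a=-(\beta^2+3)/4$, collapses $\delta$ to the identity $\delta=-9A^2$, so that $\rad(|\delta|)=\rad(A)\mid A$ automatically. A short congruence check then shows that for every odd $\beta$ with $3\nmid\beta$ one has $2\nmid AB$, $A\equiv 6\pmod 9$ and $B\bmod 9\in\{1,8\}$, so membership in $\FF_1$ reduces to the single requirement that $B=-\beta(\beta^2+3)/4$ be squarefree. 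Since $\beta$ and $(\beta^2+3)/4$ are coprime, this is exactly the assertion that the separable cubic $\beta(\beta^2+3)/4$ takes a squarefree value, which holds for infinitely many $\beta$ in the required congruence class by the theorem on squarefree values of cubics once the (easily verified) absence of a local obstruction is checked.

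For $\FF_2$ ($S_4^{+}$, $B=-1$) and $\FF_3$ ($S_4^{-}$, $k=2$, $B=3-4(A/3)^3$) the shape is rigid and only $A$ varies, so the quantity required to be squarefree is a separable cubic in the single parameter ($4A^3\pm 27$ with its $3$-part removed for $\FF_2$; $3-4(A/3)^3$ for $\FF_3$). Here there is no identity to exploit, and I expect this to be the main obstacle: one must invoke the (deep) result that a separable cubic assumes infinitely many squarefree values, and then intersect that infinite set with the prescribed congruences $4\nmid A$, $9\nmid A$ (and $3\mid A$, $3\nmid B$ for $\FF_3$), checking that these impose no local obstruction and leave infinitely many admissible parameters. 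Distinctness of the fields then follows from the template of the first paragraph, since $\operatorname{disc}(f)$ is non-constant in $A$.

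For part~\eqref{I2:Cor} I would fix $A=A_0$ and let $B$ vary, the point being that $\delta=4A_0^3+27B^{\,3-k}$ is then linear in $B$ when $k=2$ and quadratic when $k=1$; consequently every squarefree demand in Theorem~\ref{Thm:Main}\eqref{M:I8}---that $B$ and the reduced discriminant $\delta/\prod_{p\mid 2A_0B}p^{\nu_p(\delta)}$ be squarefree---becomes a squarefree-value statement for forms of degree at most two, for which simultaneous squarefreeness along a congruence class follows from a standard sieve after checking local conditions. I would choose the residues of $A_0$ and $B$ modulo a fixed modulus (a multiple of $36$) to realize the mod-$4$ and mod-$9$ conditions and to avoid the finite list of excluded pairs and the family $\FF_3$; the condition that $-B^{k-1}\delta$ be a non-square I would arrange for free by taking $B$ large, since then $-B^{k-1}\delta<0$. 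Irreducibility, which is a hypothesis of Theorem~\ref{Thm:Main} and must be established independently, I would secure by building a prime $p\mid\gcd(A_0,B)$ with $p\,\|\,B$ into the congruence class, so that $f$ is $p$-Eisenstein. The main obstacle in this part is the bookkeeping of the simultaneous squarefree sieve against all the imposed congruences, namely verifying that the several local conditions are jointly satisfiable; once that is done, distinctness again follows from the non-constancy of $\operatorname{disc}(f)$ in $B$.
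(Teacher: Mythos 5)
Your proposal is correct in outline and, for item \eqref{I1:Cor}, it is essentially the paper's own argument: your $\FF_1$ construction (impose $4a+\beta^2=-3$ so that $\delta$ collapses to $-9A^2$, then ask that the reducible cubic $\beta(\beta^2+3)/4$ be squarefree) is the same device as the paper's families $\T_1,\T_2$, which use the identities $\delta=-A^2$ and $\delta=-9A^2$ and invoke Booker--Browning for squarefree values of the reducible cubic $(2t+1)(27t^2+27t+7)$; note that your cubic is likewise reducible, so the reference must be to the reducible-polynomial result (or Estermann for the quadratic factor plus coprimality), not to Erd\H{o}s's irreducible-cubic theorem. Your treatment of $\FF_2$ and $\FF_3$ (a cubic in one parameter made squarefree, intersected with congruence classes) is exactly the paper's $\SSS$ and $\U$, and your distinctness argument is the same idea as the paper's (Corollary \ref{Cor:distinct} together with injectivity of $\Delta(f)$ in the parameter; you phrase it as eventual monotonicity of $\abs{\Delta(f)}$). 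The genuine divergence is item \eqref{I2:Cor}: the paper fixes $B=1$ and varies $A$ ($A=6v+1$ for $k=1$, $A=36v+12$ for $k=2$), so ``$B$ squarefree'' is vacuous and a single Erd\H{o}s-type condition remains, whereas you fix $A=A_0$ and vary $B$, which costs you a simultaneous squarefree sieve for $B$ and the reduced $\delta$ but buys Eisenstein irreducibility --- a real advantage, since the paper's $B=1$ families are not Eisenstein and the paper passes over irreducibility for $\SSS$, $\V_1$, $\V_2$ in silence. Two small repairs: in your $\FF_1$ family you must discard $\beta=\pm 1$ (in particular $\beta=1$ gives $B=-1$, which Theorem \ref{Thm:Main}\eqref{M:I5} excludes), and for the item-\eqref{I1:Cor} subfamilies you should also record irreducibility of $f$, since it is a hypothesis of Theorem \ref{Thm:Main} (Eisenstein at a prime dividing $a$ works for your $\FF_1$ family once $\abs{a}>1$; for $\FF_2$, where $B=-1$, a separate argument is needed).
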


\section{Preliminaries}\label{Section:Prelim}
 Throughout this article, with $k\in \{1,2\}$ and $AB\ne 0$, we let
  \begin{align}\label{Basic defs}
  \begin{split}
  f(x)&=x^6+Ax^{2k}+B,\\
  g(x)&=x^3+Ax^k+B,\\
  h(x)&=x^6+(-1)^kAB^{k-1}x^{6-2k}-B^2,\\ 
  \widehat{h}(x)&=x^3+(-1)^kAB^{k-1}x^{3-k}-B^2\ \mbox{and}\\
  \delta&=4A^3+27B^{3-k}.
  \end{split}
   \end{align} Note, from \eqref{Basic defs}, that $f(x)=g(x^2)$ and $h(x)=\widehat{h}(x^2)$.

 The next two lemmas provide some nontrivial information that exists among
  $f(x)$,  $\widehat{h}(x)$ and $h(x)$ concerning their irreducibility. 
\begin{lemma}\label{Lem:widehat(h)}
If $f(x)$ is irreducible over $\Q$, then $\widehat{h}(x)$ is irreducible over $\Q$.
\end{lemma}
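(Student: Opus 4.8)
The plan is to relate $\widehat{h}$ to $g$ through the involution $x\mapsto -B/x$ and to transport irreducibility across it. First I would record the key identity
\[
B\,\widehat{h}(x)=x^{3}\,g\!\left(\frac{-B}{x}\right),
\]
which holds uniformly for both $k=1$ and $k=2$: substituting $-B/x$ into $g(x)=x^{3}+Ax^{k}+B$ and multiplying by $x^{3}$ gives $Bx^{3}+(-1)^{k}AB^{k}x^{3-k}-B^{3}$, and dividing by $B$ reproduces exactly the coefficients of $\widehat{h}$ in \eqref{Basic defs}. This is a one-line computation straight from the defining formulas, so no genuine case split in $k$ is needed.

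Next I would reduce the problem to the irreducibility of $g$. Since $f(x)=g(x^{2})$, any nontrivial factorization $g=pq$ over $\Q$ would produce the nontrivial factorization $f(x)=p(x^{2})q(x^{2})$; hence the hypothesis that $f$ is irreducible over $\Q$ forces $g$ to be irreducible over $\Q$. Let $\theta$ be a root of $g$, so that $[\Q(\theta):\Q]=3$, and note $\theta\ne 0$ because $g(0)=B\ne 0$. Setting $\eta:=-B/\theta$ and evaluating the identity above at $x=\eta$ yields $B\,\widehat{h}(\eta)=\eta^{3}g(-B/\eta)=\eta^{3}g(\theta)=0$, so $\eta$ is a root of $\widehat{h}$.

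Finally I would compare degrees. Because $B\ne 0$, we have $\eta\in\Q(\theta)$ and $\theta=-B/\eta\in\Q(\eta)$, so $\Q(\eta)=\Q(\theta)$ and $[\Q(\eta):\Q]=3$. Thus $\eta$ has degree $3$ over $\Q$, and since $\widehat{h}$ is a monic cubic vanishing at $\eta$, it must be the minimal polynomial of $\eta$ and is therefore irreducible over $\Q$.

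There is essentially no hard step once the identity $B\,\widehat{h}(x)=x^{3}g(-B/x)$ is in hand; the only points requiring care are the bookkeeping that makes the substitution work uniformly in $k$, and the repeated use of $B\ne 0$ to guarantee both $\theta\ne 0$ and $\Q(\eta)=\Q(\theta)$. The only real ``obstacle'' is spotting the correct M\"obius substitution in the first place---equivalently, recognizing that the roots of $\widehat{h}$ are precisely the pairwise products $\theta_i\theta_j=-B/\theta_\ell$ of the roots of $g$.
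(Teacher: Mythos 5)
Your proof is correct, and it takes a genuinely different route from the paper's. The paper argues by contradiction: reducibility of the monic integer cubic $\widehat{h}(x)$ forces an integer root $r$, and from $\widehat{h}(r)=0$ the author extracts, separately for $k=1$ and $k=2$ (the latter via a gcd analysis showing $r\mid B$ and then $s\mid r$), an explicit factorization of $f(x)$ of the form $(x^2\pm s)(x^4\mp sx^2+r)$, contradicting the irreducibility of $f(x)$. Your argument instead transports irreducibility directly: $f(x)=g(x^2)$ gives irreducibility of $g(x)$, and the identity $B\,\widehat{h}(x)=x^{3}g(-B/x)$ (which does hold uniformly in $k$; both cases check out, and all the needed nonvanishing is supplied by $B\ne 0$) exhibits $\widehat{h}(x)$ as the monic minimal polynomial of $\eta=-B/\theta$, an element generating the same cubic field $\Q(\theta)$. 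Your route is shorter, uniform in $k$, avoids the rational-root and gcd bookkeeping entirely, and exposes the structural reason the lemma is true: the roots of $\widehat{h}(x)$ are the pairwise products $\theta_i\theta_j=-B/\theta_\ell$ of the roots of $g(x)$, which is exactly the resolvent role that $h(x)=\widehat{h}(x^2)$ plays in Theorem \ref{Thm:AJTri}. What the paper's more elementary proof buys in exchange is explicit factorization data---it shows precisely how $f(x)$ would factor if $\widehat{h}(x)$ had a root---but that extra information is not used elsewhere in the paper, so nothing downstream depends on it.
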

\begin{proof}
 By way of contradiction, assume that $\widehat{h}(x)$ is reducible. Then $\widehat{h}(r)=0$ for some nonzero integer $r$. If $k=1$, then
     $r^2(r-A)=B^2$, which implies that $r-A=s^2$ for some nonzero integer $s$. Thus, $B=\pm rs$, and
     \[f(x)=x^6+(r-s^2)x^2\pm rs=(x^2\pm s)(x^4\mp sx^2+r),\]
     which contradicts the fact that $f(x)$ is irreducible. If $k=2$, then
     \begin{equation}\label{Eq:k=2 hh}
     r(r^2+AB)=B^2.
      \end{equation} Let $z=\gcd(r,B)$. Then we can write $r=z\ell$ and $B=zs$ for some integers $\ell$ and $s$  with $\gcd(\ell,s)=1$. Thus, it follows from \eqref{Eq:k=2 hh} that
      \[\ell(z\ell^2+As)=s^2,\] which implies that $\ell\mid s^2$. Hence, $\ell=\pm 1$ since $\ell$ and $s$ are coprime. Therefore, $r\mid B$ and $B=\pm rs$. Then, from \eqref{Eq:k=2 hh}, we see that $A=(s^2-r)/(\pm s)$, which implies that $s\mid r$. Consequently,
      \[f(x)=x^6\pm (s-r/s)x^4\pm rs=(x^2\pm s)(x^4\mp (r/s)x^2+r),\] which again contradicts the fact that $f(x)$ is irreducible.
      Hence, we have established that $\widehat{h}(x)$ is irreducible in either case of $k\in \{1,2\}$.
\end{proof}

 The next lemma follows from  \cite[Theorem 3.3 and Corollary 3.4]{HJMS}.
 \begin{lemma}\label{Lem:HJSextic}
   Suppose that $\widehat{h}(x)$ is irreducible over $\Q$. If $h(x)$ is reducible over $\Q$, then
   \begin{equation}\label{M}
      M(x):=x^4+2(k-2)Ax^2-8Bx+A^{3-k}(-4B)^{k-1}
      \end{equation}
        has a unique integer zero $\mu$, and $h(x)$ 
      factors into irreducibles over $\Z$ as
   \[\left(x^3+\mu x^2+\left(\frac{\mu^2+(k-2)A}{2}\right)x+B\right)\left(x^3-\mu x^2+\left(\frac{\mu^2+(k-2)A}{2}\right)x
   -B\right).\]
 \end{lemma}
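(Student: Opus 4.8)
The plan is to analyze the factorization of $h(x)=\widehat{h}(x^2)$ directly, exploiting that $h$ is an even polynomial whose associated cubic $\widehat{h}$ is irreducible. First I would record the constraints on the factorization type. Writing $\alpha_1,\alpha_2,\alpha_3$ for the roots of $\widehat{h}$ and $\beta_i$ for a square root of $\alpha_i$, the six roots of $h$ are $\pm\beta_1,\pm\beta_2,\pm\beta_3$. Since $\alpha_i=\beta_i^2\in\Q(\beta_i)$ and $[\Q(\alpha_i):\Q]=3$, the degree $[\Q(\beta_i):\Q]$ is a multiple of $3$; in particular it is never $1$ or $2$. Consequently $h$ has no factor of degree $1$ or $2$ over $\Q$: a linear factor would put some $\pm\beta_i$ in $\Q$, while a quadratic factor would force $[\Q(\beta_i):\Q]\le 2$ (the case $x^2-\alpha_i$ being excluded since $\alpha_i\notin\Q$). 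Hence, if $h$ is reducible, it must factor as a product of two monic cubics, which lie in $\Z[x]$ by Gauss's lemma.

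Next I would pin down the shape of this factorization using the evenness of $h$. If $p(x)=x^3+\mu x^2+\nu x+\rho$ is one monic cubic factor, then applying $x\mapsto -x$ to $h(x)=h(-x)$ shows that $-p(-x)=x^3-\mu x^2+\nu x-\rho$ is also a monic factor. By unique factorization in $\Z[x]$ this second cubic is either $p$ itself or the complementary factor. The first alternative forces $p(x)=x^3+\nu x$, hence $p(0)=0$ and $B=0$, contradicting $AB\ne0$; so the two cubic factors are exactly $p(x)$ and $-p(-x)$, and after possibly swapping them I may assume $\rho=B$ (the product of constant terms being $-B^2$). Thus $h(x)=(x^3+\mu x^2+\nu x+B)(x^3-\mu x^2+\nu x-B)$ for integers $\mu,\nu$.

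I would then expand this product to $x^6+(2\nu-\mu^2)x^4+(\nu^2-2\mu B)x^2-B^2$ and match it against $h(x)=x^6+(-1)^kAB^{k-1}x^{6-2k}-B^2$. The $x^4$-coefficient gives $2\nu-\mu^2=(k-2)A$, i.e. $\nu=(\mu^2+(k-2)A)/2$, while the $x^2$-coefficient, after substituting this value of $\nu$, reduces precisely to $M(\mu)=0$ (a short computation for each of $k=1,2$). This establishes that $\mu$ is an integer zero of $M$ and yields the displayed factorization. For uniqueness I would run the correspondence backwards: given any integer zero $\mu$ of $M$, a mod-$2$ check using $M(\mu)=0$ shows $\mu^2+(k-2)A$ is even, so $\nu=(\mu^2+(k-2)A)/2\in\Z$, and reversing the coefficient computation shows $(x^3+\mu x^2+\nu x+B)(x^3-\mu x^2+\nu x-B)=h(x)$. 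Two distinct integer zeros would therefore give two factorizations of $h$ into irreducible cubics (irreducible because $h$ has no lower-degree factors) whose first factors both have constant term $B\ne -B$; unique factorization then forces these first factors, and hence the two values of $\mu$, to coincide.

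I expect the main obstacle to be the uniqueness argument rather than the factorization itself: one must ensure that \emph{every} integer zero of $M$, not merely the one arising from a presumed factorization, produces a genuine factorization over $\Z$, which is where the integrality of $\nu$ and the rigidity coming from the fixed constant term $+B$ together with unique factorization in $\Z[x]$ do the essential work.
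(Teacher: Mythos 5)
Your proof is correct, and it is genuinely different from what the paper does: the paper offers no argument of its own, deriving the lemma instead as a consequence of \cite[Theorem 3.3 and Corollary 3.4]{HJMS}. Your argument reconstructs, in a self-contained way, essentially the content of those cited results specialized to $\widehat{h}(x)$ and $h(x)=\widehat{h}(x^2)$. The key steps all check out: since every root $\beta$ of $h$ has $\beta^2$ a root of the irreducible cubic $\widehat{h}$, the degree $[\Q(\beta):\Q]$ is divisible by $3$, so a nontrivial factorization must be into two monic cubics, integral by Gauss's lemma and irreducible because $h$ has no linear or quadratic factors; the evenness of $h$ together with uniqueness of factorization in $\Q[x]$ forces the two factors to be $p(x)$ and $-p(-x)$, the degenerate case $p(x)=-p(-x)$ being killed by $h(0)=-B^2\ne 0$; and coefficient matching gives $\nu=(\mu^2+(k-2)A)/2$ and, upon eliminating $\nu$, exactly $M(\mu)=0$ for each $k\in\{1,2\}$. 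The uniqueness claim is the part most easily botched, and you handle it properly: you verify the converse direction (each integer root of $M$ has the parity needed to make $\nu$ integral and genuinely yields a factorization of $h$), so two distinct integer roots would give two cubic-times-cubic factorizations whose ``first'' factors share the constant term $B$; unique factorization together with $B\ne -B$ (as $AB\ne 0$) then forces the two roots of $M$ to coincide. What the paper's citation buys is brevity and the greater generality of the Harrington--Jones results, which treat $q(x^2)$ for an arbitrary irreducible cubic $q$; what your route buys is a complete, elementary proof inside the paper that makes transparent both where the resolvent quartic $M(x)$ comes from and why $\mu$ is unique.
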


  The following theorem, due to Swan \cite{Swan}, gives the formula for the discriminant of an arbitrary trinomial.
\begin{thm}\label{Thm:Swan}
Let ${\mathfrak F}(x)=x^n+Ax^m+B\in \Z[x]$, where $0<m<n$, and let $d=\gcd(n,m)$. Then
\[
\Delta({\mathfrak F})=(-1)^{n(n-1)/2}B^{m-1}\left(n^{n/d}B^{(n-m)/d}-(-1)^{n/d}(n-m)^{(n-m)/d}m^{m/d}A^{n/d}\right)^d.
\]
\end{thm}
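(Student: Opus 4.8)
The plan is to obtain the formula from the standard relation between the discriminant and the resultant. Since $\mathfrak{F}$ is monic of degree $n$, one has $\Delta(\mathfrak{F})=(-1)^{n(n-1)/2}\operatorname{Res}(\mathfrak{F},\mathfrak{F}')$. The derivative factors as $\mathfrak{F}'(x)=nx^{n-1}+mAx^{m-1}=x^{m-1}\bigl(nx^{n-m}+mA\bigr)$, and because the resultant is multiplicative in its second argument, $\operatorname{Res}(\mathfrak{F},\mathfrak{F}')=\operatorname{Res}(\mathfrak{F},x^{m-1})\cdot\operatorname{Res}(\mathfrak{F},nx^{n-m}+mA)$. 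I would compute the two factors separately. The first is immediate from the product-over-roots formula: $\operatorname{Res}(\mathfrak{F},x^{m-1})=\prod_{\mathfrak{F}(\alpha)=0}\alpha^{m-1}=\bigl((-1)^nB\bigr)^{m-1}$, since the product of the roots of $\mathfrak{F}$ equals $(-1)^nB$.

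The substantial factor I would evaluate via $\operatorname{Res}(\mathfrak{F},g)=(-1)^{n\deg g}(\text{lead }g)^n\prod_{g(\beta)=0}\mathfrak{F}(\beta)$ with $g(x)=nx^{n-m}+mA$. At a root $\beta$ of $g$ one has $\beta^{n-m}=-mA/n$, so the top term collapses and $\mathfrak{F}(\beta)=\beta^n+A\beta^m+B=\tfrac{(n-m)A}{n}\beta^m+B$, a linear expression in $\beta^m$. Writing the roots of $g$ as $\beta=\zeta\beta_0$ with $\beta_0$ fixed and $\zeta$ ranging over the group $\mu_{n-m}$ of $(n-m)$-th roots of unity, the required product becomes $\prod_{\zeta\in\mu_{n-m}}\bigl(\tfrac{(n-m)A}{n}\zeta^m\beta_0^m+B\bigr)$.

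The crucial step, where the gcd $d=\gcd(n,m)=\gcd(n-m,m)$ enters, is the observation that the $m$-th power map on $\mu_{n-m}$ is $d$-to-$1$ onto $\mu_{(n-m)/d}$ (its kernel has order $\gcd(n-m,m)=d$). Hence the product over $\mu_{n-m}$ equals the $d$-th power of a product over $\mu_N$ with $N=(n-m)/d$, and the latter reduces by $\prod_{\omega\in\mu_N}(X-\omega)=X^N-1$ to a single binomial: setting $a=\tfrac{(n-m)A}{n}\beta_0^m$ one gets $\prod_{\omega\in\mu_N}(a\omega+B)=B^N-(-1)^N a^N$. Substituting $\beta_0^{mN}=(-mA/n)^{m/d}$ and collecting the powers of $n$, $m$, $n-m$, and $A$ turns $a^N$ into $(-1)^{m/d}n^{-n/d}(n-m)^{(n-m)/d}m^{m/d}A^{n/d}$, so the binomial becomes $n^{-n/d}\bigl(n^{n/d}B^{(n-m)/d}-(-1)^{n/d}(n-m)^{(n-m)/d}m^{m/d}A^{n/d}\bigr)$, using $(-1)^{(n-m)/d}(-1)^{m/d}=(-1)^{n/d}$. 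Multiplying back by $(\text{lead }g)^n=n^n=(n^{n/d})^d$ absorbs the denominator inside the $d$-th power and produces exactly the bracketed factor of the theorem.

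Finally I would reassemble the pieces. The product of the two resultants carries the sign $(-1)^{n(m-1)}\cdot(-1)^{n(n-m)}=(-1)^{n(n-1)}=1$, so $\operatorname{Res}(\mathfrak{F},\mathfrak{F}')=B^{m-1}(\cdots)^d$, and multiplying by $(-1)^{n(n-1)/2}$ gives the stated formula. This is the classical argument of Swan, and in the paper it is simply cited, but the sketch above is the route I would take. The main obstacle is the bookkeeping in the grouping step: verifying that the fibers of the $m$-th power map have size $d$, and carefully accumulating the fractional exponents $m/d$ and $(n-m)/d$ together with the signs $(-1)^{m/d}$ and $(-1)^{(n-m)/d}$ so that they combine into the single sign $(-1)^{n/d}$. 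Everything else is routine resultant algebra.
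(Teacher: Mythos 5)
Your argument is correct, but there is nothing in the paper to compare it against: the paper states this result as Swan's theorem and cites \cite{Swan} without reproducing any proof, using the formula purely as a black box (via Corollary \ref{Cor:Swan}). Your reconstruction is essentially the classical proof, and the delicate steps all check out. The decomposition $\Delta(\mathfrak{F})=(-1)^{n(n-1)/2}\operatorname{Res}(\mathfrak{F},\mathfrak{F}')$ with $\mathfrak{F}'=x^{m-1}\bigl(nx^{n-m}+mA\bigr)$ and multiplicativity of the resultant is valid; $\operatorname{Res}(\mathfrak{F},x^{m-1})=\bigl((-1)^nB\bigr)^{m-1}$ is right; at a root $\beta$ of $nx^{n-m}+mA$ one indeed gets $\mathfrak{F}(\beta)=\tfrac{(n-m)A}{n}\beta^m+B$; the $m$-th power map on $\mu_{n-m}$ has kernel $\mu_{\gcd(m,n-m)}=\mu_d$ and image $\mu_{(n-m)/d}$, so the product over roots is a $d$-th power as claimed; the identity $\prod_{\omega\in\mu_N}(a\omega+B)=B^N-(-1)^Na^N$ is correct; and the exponent and sign bookkeeping works, since $N+m/d=n/d$ gives $a^N=(-1)^{m/d}n^{-n/d}(n-m)^{(n-m)/d}m^{m/d}A^{n/d}$, the factor $(\operatorname{lead}g)^n=n^n=(n^{n/d})^d$ absorbs the denominator inside the $d$-th power, and $(-1)^{n(m-1)}(-1)^{n(n-m)}=(-1)^{n(n-1)}=1$ kills the resultant signs. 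The only caveat worth recording is that your root-grouping step presupposes $mA\neq 0$, so that the roots of $nx^{n-m}+mA$ are nonzero and of the form $\zeta\beta_0$; the case $A=0$ would need a separate (trivial) treatment, but it is excluded throughout the paper, which assumes $A\neq 0$, so this costs nothing in context.
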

\noindent Applying Theorem \ref{Thm:Swan} to $f(x)$ and $g(x)$ yields
\begin{cor}\label{Cor:Swan}
  \[\Delta(f)=-64B^{2k-1}\delta^2 \quad \mbox{and} \quad \Delta(g)=-B^{k-1}\delta.\]
\end{cor}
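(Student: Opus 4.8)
The plan is to substitute directly into the discriminant formula of Theorem~\ref{Thm:Swan} for each of $g(x)$ and $f(x)$, and then simplify, exploiting the fact that $k\in\{1,2\}$ forces small fixed values for the greatest common divisor $d$.

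First I handle $g(x)=x^3+Ax^k+B$ by setting $n=3$ and $m=k$. Since $3$ is prime and $0<k<3$, we have $d=\gcd(3,k)=1$ for each $k\in\{1,2\}$. Plugging into Theorem~\ref{Thm:Swan} gives $(-1)^{n(n-1)/2}=-1$ and $(-1)^{n/d}=-1$, so the formula reduces to
\[
\Delta(g)=-B^{k-1}\left(27B^{3-k}+(3-k)^{3-k}k^k A^3\right).
\]
The key observation is that the coefficient $(3-k)^{3-k}k^k$ equals $4$ for both $k=1$ (giving $2^2\cdot 1=4$) and $k=2$ (giving $1^1\cdot 4=4$). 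Hence the parenthesized expression is exactly $27B^{3-k}+4A^3=\delta$, and $\Delta(g)=-B^{k-1}\delta$, as claimed.

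For $f(x)=x^6+Ax^{2k}+B$ I set $n=6$ and $m=2k$. Now $d=\gcd(6,2k)=2$ for each $k\in\{1,2\}$ (since $\gcd(6,2)=\gcd(6,4)=2$), and it is this value $d=2$ that produces the square in the formula. Here $(-1)^{n(n-1)/2}=(-1)^{15}=-1$ and $(-1)^{n/d}=(-1)^3=-1$, so Theorem~\ref{Thm:Swan} yields
\[
\Delta(f)=-B^{2k-1}\left(216B^{3-k}+(6-2k)^{3-k}(2k)^k A^3\right)^2.
\]
As before I verify that the numerical coefficients are independent of $k$: one computes $(6-2k)^{3-k}(2k)^k=32$ for both $k=1$ (giving $4^2\cdot 2$) and $k=2$ (giving $2\cdot 4^2$). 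Factoring $8$ out of the bracket, $216B^{3-k}+32A^3=8(27B^{3-k}+4A^3)=8\delta$, so that $\Delta(f)=-B^{2k-1}(8\delta)^2=-64B^{2k-1}\delta^2$.

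The only real obstacle is clerical rather than conceptual: one must correctly track the several exponents $(n-m)/d$, $m/d$ and $n/d$ appearing in Swan's formula and confirm that the exponent-dependent coefficients collapse to the same constants ($4$ in the cubic case and $32$ in the sextic case) across both admissible values of $k$. Once that uniformity is verified, both identities follow immediately by substituting $\delta=4A^3+27B^{3-k}$.
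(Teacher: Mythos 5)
Your proof is correct and follows exactly the route the paper intends: the paper gives no written proof beyond the phrase ``Applying Theorem~\ref{Thm:Swan} to $f(x)$ and $g(x)$ yields,'' i.e., direct substitution into Swan's formula, which is precisely what you carried out. Your computations of the signs, of $d$ ($d=1$ for $g$, $d=2$ for $f$), and of the $k$-independent coefficients ($4$ and $32$, the latter giving the factor $8^2=64$) are all accurate.
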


The next theorem is an adaption of the work in \cite{AJ} to the particular trinomials $f(x)$ in \eqref{Basic defs}, and gives  precise criteria to determine $\Gal(f)$.
 \begin{thm}\label{Thm:AJTri}
 Suppose that $f(x)$ is irreducible over $\Q$. Then, with $\Delta(g)$ as given in Corollary \ref{Cor:Swan}, we have
  \[\Gal(f)\simeq \left\{\begin{array}{cl}
  C_6 & \mbox{if and only if $-B$ is not a square, $\Delta(g)$ is a square}\\
   & \mbox{and $h(x)$ is reducible;}\\[.5em]
  S_3 & \mbox{if and only if neither $-B$ nor $\Delta(g)$ is a square,}\\
  & \mbox{$-B\Delta(g)$ is a square and $h(x)$ is reducible;}\\[.5em]
  C_2\times S_3 & \mbox{if and only if neither $-B$ nor $\Delta(g)$ nor $-B\Delta(g)$}\\
  & \mbox{is a square and $h(x)$ is reducible;}\\[.5em]
  A_4 & \mbox{if and only if $-B$ and $\Delta(g)$ are squares}\\
  & \mbox{and $h(x)$ is irreducible;}\\[.5em]
  C_2\times A_4 & \mbox{if and only if $-B$ is not a square, $\Delta(g)$ is a square}\\
  & \mbox{and $h(x)$ is irreducible;}\\[.5em]
  S_4^{+} & \mbox{if and only if $-B$ is a square, $\Delta(g)$ is not a square}\\
  & \mbox{and $h(x)$ is irreducible;}\\[.5em]
  S_4^{-} & \mbox{if and only if neither $-B$ nor $\Delta(g)$ is a square,}\\
  & \mbox{$-B\Delta(g)$ is a square and $h(x)$ is irreducible;}\\[.5em]
  C_2\times S_4 & \mbox{if and only if neither $-B$ nor $\Delta(g)$ nor $-B\Delta(g)$}\\
  & \mbox{is a square and $h(x)$ is irreducible.}\\[.5em]
  \end{array}\right.\]
\end{thm}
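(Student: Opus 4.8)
The plan is to exploit the power-compositional structure $f(x)=g(x^2)$, which forces $\Gal(f)$ to sit inside the hyperoctahedral group $C_2\wr S_3$ of signed permutations of the three root-pairs $\{\pm\beta_i\}$, where $\beta_i^2=\alpha_i$ and $\alpha_1,\alpha_2,\alpha_3$ are the roots of $g$. Writing $L=\Q(\alpha_1,\alpha_2,\alpha_3)$ for the splitting field of $g$ and $N=L(\beta_1,\beta_2,\beta_3)$ for that of $f$, restriction to $L$ yields a short exact sequence $1\to V\to \Gal(f)\to \Gal(g)\to 1$, in which $V=\Gal(N/L)\le C_2^3$ records the sign changes $\beta_i\mapsto \pm\beta_i$. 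Each of the eight target groups is recovered from three data: the isomorphism type of $\Gal(g)$, the position of $\sqrt{-B}=\beta_1\beta_2\beta_3$, and the reducibility of $h$. So the entire proof amounts to reading off these data from the stated square/nonsquare conditions, and the most efficient route is to invoke the general even-sextic classification of \cite{AJ} after verifying that, for the trinomial $f$, the resolvent cubic is $g$, the governing discriminant is $\Delta(g)=-B^{k-1}\delta$ of Corollary \ref{Cor:Swan}, the constant invariant is $-B$, and the sextic resolvent is $h$.

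First I would record the classical dichotomy $\Gal(g)\simeq C_3$ or $S_3$ according as $\Delta(g)$ is or is not a square in $\Q$. Next I would pin down $V$ by Kummer theory applied to the multiquadratic extension $N=L(\sqrt{\alpha_1},\sqrt{\alpha_2},\sqrt{\alpha_3})$: the group $V$ is dual to the subgroup of $L^{*}/(L^{*})^2$ generated by the classes of $\alpha_1,\alpha_2,\alpha_3$, and the only available relation among these classes comes from $\alpha_1\alpha_2\alpha_3=-B$, i.e.\ from the class of $\sqrt{-B}$. The three conditions in the statement --- whether $-B$, $\Delta(g)$, and $-B\Delta(g)$ are squares --- are exactly the conditions deciding whether $\sqrt{-B}$ lies in $\Q$, in the quadratic resolvent $\Q(\sqrt{\Delta(g)})\subseteq L$, or strictly outside $L$, and therefore fix the order of $V$ and the extension class.

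The role of $h$ is to detect the remaining ambiguity. Since $\widehat{h}$ is irreducible by Lemma \ref{Lem:widehat(h)} and its roots are the pairwise products $\alpha_i\alpha_j$, the six roots of $h=\widehat{h}(x^2)$ are exactly $\pm\beta_i\beta_j$. A sign change $(\epsilon_1,\epsilon_2,\epsilon_3)$ sends $\beta_i\beta_j\mapsto\epsilon_i\epsilon_j\beta_i\beta_j$, so it fixes the triple $\{\beta_2\beta_3,\beta_1\beta_3,\beta_1\beta_2\}$ (whose product is $-B$) if and only if $\epsilon_1=\epsilon_2=\epsilon_3$; hence the subgroup of $C_2\wr S_3$ stabilizing the partition of the roots of $h$ into these two triples is precisely the diagonal copy $Z\rtimes S_3\cong C_2\times S_3$ of order $12$. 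Using the explicit factorization of Lemma \ref{Lem:HJSextic}, I would show that $h$ is reducible if and only if $\Gal(f)$ is contained in this index-$4$ subgroup, which is exactly the condition separating the three ``small'' groups $C_6,S_3,C_2\times S_3$ from the five ``large'' groups $A_4,C_2\times A_4,S_4^{+},S_4^{-},C_2\times S_4$.

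Finally I would assemble the table: for each value of $\Gal(g)$ and each admissible position of $\sqrt{-B}$, the order together with the containment-or-not in $C_2\times S_3$ identifies $\Gal(f)$ uniquely, the irreducibility of $f$ being used to discard the intransitive possibilities. For instance, $\Gal(g)\simeq C_3$ together with $-B$ a square and $h$ reducible would force $\Gal(f)\subseteq C_6$ with $\beta_1\beta_2\beta_3\in\Q$, leaving only the intransitive group $C_3$ and contradicting the irreducibility of $f$; such degenerate combinations are exactly the ones absent from the statement. I expect the main obstacle to be the determination of the exact order of $V$ --- equivalently, the precise degree of $N/L$ --- by controlling all multiplicative relations among $\alpha_1,\alpha_2,\alpha_3$ in $L^{*}/(L^{*})^2$, and then confirming that the reducibility of the specific polynomial $h$ in \eqref{Basic defs}, via the root $\mu$ of Lemma \ref{Lem:HJSextic}, corresponds exactly to containment in $C_2\times S_3$ rather than to some other index-two phenomenon inside $C_2\wr S_3$.
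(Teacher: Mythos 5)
Your primary route --- invoking the even-sextic classification of \cite{AJ} after identifying, for the trinomial $f(x)=g(x^2)$, the resolvent cubic as $g$, the discriminant as $\Delta(g)=-B^{k-1}\delta$, the constant invariant as $-B$, and the sextic resolvent as $h$ --- is exactly the paper's approach: Theorem \ref{Thm:AJTri} is presented there as ``an adaption of the work in \cite{AJ}'' to the trinomials of \eqref{Basic defs}, with no further proof given. Your supplementary wreath-product/Kummer sketch goes beyond anything the paper develops and is structurally sound (the roots of $h$ are indeed $\pm\beta_i\beta_j$, and the stabilizer of the two triples is the diagonal copy of $C_2\times S_3$), although your claim that the only relation among the classes of $\alpha_1,\alpha_2,\alpha_3$ in $L^{*}/(L^{*})^{2}$ comes from $\alpha_1\alpha_2\alpha_3=-B$ is false in general --- when $\Gal(f)\simeq C_6$ all three classes coincide --- which is exactly the obstacle you flag yourself and exactly what the reducibility of $h$ is there to detect.
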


We now present some basic information concerning the monogenicity of a polynomial $\mathfrak{F}(x)$.
Suppose that $\mathfrak{F}(x)\in \Z[x]$ is monic and irreducible over $\Q$.
Let $K=\Q(\theta)$ with ring of integers $\Z_K$, where $\mathfrak{F}(\theta)=0$. Then, we have \cite{Cohen}
\begin{equation} \label{Eq:Dis-Dis}
\Delta(\mathfrak{F})=\left[\Z_K:\Z[\theta]\right]^2\Delta(K),
\end{equation}
where $\Delta(\mathfrak{F})$ and $\Delta(K)$ denote the discriminants over $\Q$, respectively, of $\mathfrak{F}(x)$ and the number field $K$.
Thus, from \eqref{Eq:Dis-Dis}, 
\begin{equation}\label{MonoCondition}
\mathfrak{F}(x) \ \mbox{is monogenic if and only if} \ \Delta(\mathfrak{F})=\Delta(K), \ \mbox{or equivalently},\  \Z_K=\Z[\theta].
\end{equation}
We then have from \eqref{MonoCondition} a sufficient condition for when two monogenic polynomials of the same degree are distinct.
\begin{cor}\label{Cor:distinct}
  Let $\mathfrak{F}_1(x)\ne \mathfrak{F}_2(x)$ be two monogenic polynomials such that $\deg(\mathfrak{F}_1)=\deg(\mathfrak{F}_2)$. Let $K_i=\Q(\theta_i)$, where $\mathfrak{F}_i(\theta_i)=0$. If $\Delta(\mathfrak{F}_1)\ne \Delta(\mathfrak{F}_2)$, then $K_1\ne K_2$ so that 
  $\mathfrak{F}_1(x)$ and $\mathfrak{F}_2(x)$ are distinct.
\end{cor}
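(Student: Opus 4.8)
The plan is to argue by contradiction using the characterization of monogenicity in \eqref{MonoCondition}. The essential point is that the field discriminant $\Delta(K)$ is an invariant of the number field $K$ alone and does not depend on the choice of generating element $\theta_i$, whereas the polynomial discriminant $\Delta(\mathfrak{F}_i)$ is tied to the specific polynomial. Monogenicity is precisely the condition that forces these two quantities to coincide, so equality of the fields will be seen to force equality of the polynomial discriminants.

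Concretely, I would begin by supposing, for contradiction, that $K_1 = K_2 =: K$. Since $\mathfrak{F}_1(x)$ is monogenic, \eqref{MonoCondition} gives $\Delta(\mathfrak{F}_1) = \Delta(K_1) = \Delta(K)$; likewise, since $\mathfrak{F}_2(x)$ is monogenic, \eqref{MonoCondition} gives $\Delta(\mathfrak{F}_2) = \Delta(K_2) = \Delta(K)$. Combining these two equalities immediately yields $\Delta(\mathfrak{F}_1) = \Delta(K) = \Delta(\mathfrak{F}_2)$, which contradicts the hypothesis $\Delta(\mathfrak{F}_1) \ne \Delta(\mathfrak{F}_2)$. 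Hence $K_1 \ne K_2$, and by the definition of \emph{distinct} given in the Introduction, $\mathfrak{F}_1(x)$ and $\mathfrak{F}_2(x)$ are distinct, as desired.

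There is essentially no hard step here: the result is a direct logical consequence of the equivalence recorded in \eqref{MonoCondition}, itself derived from the index relation \eqref{Eq:Dis-Dis}. The only point that requires a moment's care is the implicit use of the fact that $\Delta(K)$ is well defined independently of the generator $\theta_i$ — that is, two monogenic polynomials cutting out the same field must share the common value $\Delta(K)$ — and this is exactly what makes the polynomial discriminant a usable separating invariant. The equal-degree hypothesis is needed only to ensure both $K_1$ and $K_2$ have the same degree over $\Q$, so that comparing them as fields is meaningful and the index relation \eqref{Eq:Dis-Dis} applies uniformly to each.
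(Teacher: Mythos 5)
Your proof is correct and is exactly the argument the paper intends: the corollary is stated as an immediate consequence of \eqref{MonoCondition}, and your contradiction argument (equal fields would force $\Delta(\mathfrak{F}_1)=\Delta(K)=\Delta(\mathfrak{F}_2)$) is precisely that deduction spelled out. No gaps.
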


The next theorem, due to Jakhar, Khanduja and Sangwan \cite{JKS2}, gives necessary and sufficient conditions for an arbitrary irreducible trinomial to be monogenic. 
\begin{thm}\label{Thm:JKS2} 
Let $n,m\in \Z$ with $n>m\ge 1$.
Let $K=\Q(\theta)$ be an algebraic number field with $\theta\in \Z_K$, the ring of integers of $K$, having minimal polynomial $\mathfrak{F}(x)=x^{n}+Ax^m+B$ over $\Q$, where $\gcd(m,n)=d_0$, $m=m_1d_0$ and $n=n_1d_0$. A prime factor $p$ of $\Delta(\mathfrak{F})$ does not divide $\left[\Z_K:\Z[\theta]\right]$ if and only if $p$ satisfies one of the following conditions:
 \begin{enumerate}[label=(\roman*), font=\normalfont]
  \item \label{JKS:C1} when $p\mid A$ and $p\mid B$, then $p^2\nmid B$;
  \item \label{JKS:C2} when $p\mid A$ and $p\nmid B$, then
  \[\mbox{either } \quad p\mid a_2 \mbox{ and } p\nmid b_1 \quad \mbox{ or } \quad p\nmid a_2\left((-B)^{m_1}a_2^{n_1}+\left(-b_1\right)^{n_1}\right),\]
  where $a_2=A/p$ and $b_1=\frac{B+(-B)^{p^j}}{p}$, such that $p^j\mid\mid n$ with $j\ge 1$;
  \item \label{JKS:C3} when $p\nmid A$ and $p\mid B$, then
  \[\qquad \quad \mbox{either}\quad p\mid a_1 \mbox{ and } p\nmid b_2  \quad\mbox{or}\quad  p\nmid a_1b_2^{m-1}\left((-A)^{m_1}a_1^{n_1-m_1}-\left(-b_2\right)^{n_1-m_1}\right),\]
  where $a_1=\frac{A+(-A)^{p^l}}{p}$, such that $p^l\mid\mid (n-m)$ with $l\ge 0$, and $b_2=B/p$;
  \item \label{JKS:C4} when $p\nmid AB$ and $p\mid m$ with $n=s^{\prime}p^r$, $m=sp^r$, $p\nmid \gcd\left(s^{\prime},s\right)$, then the polynomials
   \begin{equation*}
     G(x):=x^{s^{\prime}}+Ax^s+B \quad \mbox{and}\quad H(x):=\dfrac{Ax^{sp^r}+B+\left(-Ax^s-B\right)^{p^r}}{p}
   \end{equation*}
   are coprime modulo $p$;
         \item \label{JKS:C5} when $p\nmid ABm$, then
     \[p^2\nmid \left(B^{n_1-m_1}n_1^{n_1}-(-1)^{m_1}A^{n_1}m_1^{m_1}(m_1-n_1)^{n_1-m_1}\right).\]
   \end{enumerate}
\end{thm}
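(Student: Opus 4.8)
The plan is to establish the equivalences one prime at a time: for each prime $p \mid \Delta(\mathfrak F)$, I would translate the condition ``$p \nmid [\Z_K:\Z[\theta]]$'' into an explicit, checkable statement about $\mathfrak F \bmod p$ by means of \emph{Dedekind's criterion}, reinforced for wildly repeated factors by Ore's theorem on $\phi$-Newton polygons. Concretely, factor $\overline{\mathfrak F} = \prod_i \overline{g_i}^{\,e_i}$ in $\F_p[x]$ with the $\overline{g_i}$ distinct monic irreducibles, lift to monic $g_i \in \Z[x]$, put $g = \prod_i g_i$ and $h = \prod_i g_i^{\,e_i-1}$ so that $\overline{gh} = \overline{\mathfrak F}$, and set $T = (\mathfrak F - gh)/p \in \Z[x]$. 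By \eqref{Eq:Dis-Dis}, $p \nmid [\Z_K:\Z[\theta]]$ is equivalent to $p$-maximality of $\Z[\theta]$, and Dedekind's criterion renders this as $\gcd(\overline T,\ \overline{g_i}) = 1$ for every index $i$ with $e_i \ge 2$. The entire theorem is the explicit output of this gcd computation for $\mathfrak F(x) = x^n + Ax^m + B$, specialized to each possible shape of $\overline{\mathfrak F}$.

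The five listed conditions correspond to an exhaustive partition of the primes by the divisibilities $p \mid A$, $p \mid B$, $p \mid m$, and in each case the form of $\overline{\mathfrak F}$ is forced, so I would simply run the criterion case by case. Two cases are immediate. In case \ref{JKS:C1} ($p \mid A$, $p \mid B$) one has $\overline{\mathfrak F} = x^n$, the unique repeated factor is $x$, and $T = (A/p)x^m + (B/p)$, so $x \nmid \overline T$ reads $p \nmid (B/p)$, i.e. $p^2 \nmid B$. In case \ref{JKS:C5} ($p \nmid ABm$) the prime is tame: every repeated factor of $\overline{\mathfrak F}$ occurs with multiplicity exactly $2$, and the criterion collapses to the statement that the ``interesting factor'' of $\Delta(\mathfrak F)$ coming from Swan's formula (Theorem \ref{Thm:Swan}), namely $B^{n_1-m_1}n_1^{n_1}-(-1)^{m_1}A^{n_1}m_1^{m_1}(m_1-n_1)^{n_1-m_1}$, is not divisible by $p^2$.

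The substance lies in cases \ref{JKS:C2}, \ref{JKS:C3} and \ref{JKS:C4}, where exactly one of the tame divisibilities fails and $\overline{\mathfrak F}$ acquires a factor of multiplicity a power of $p$. Using $\overline{c}^{\,p^{j}} = \overline{c}$ in $\F_p$, one gets $\overline{\mathfrak F} = (x^{n'}+\overline B)^{p^{j}}$ in case \ref{JKS:C2} (with $p^{j}\|n$), and $\overline{\mathfrak F} = x^{m}\,(x^{q}+\overline A)^{p^{l}}$ in case \ref{JKS:C3} (with $p^{l}\|(n-m)$), so the repeated factors are the irreducible components of a binomial together with (in case \ref{JKS:C3}) the linear factor $x$. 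Expanding the lift $gh = (x^{n'}+B)^{p^{j}}$ and reducing, all interior binomial coefficients $\binom{p^{j}}{i}$ are divisible by $p$; the surviving terms are precisely what produce the quantities $a_2 = A/p$ and $b_1 = (B + (-B)^{p^{j}})/p$ (respectively $a_1$ and $b_2 = B/p$), and the gcd of $\overline T$ with the binomial, evaluated as a resultant, degenerates to the displayed expression $(-B)^{m_1}a_2^{n_1}+(-b_1)^{n_1}$ (respectively its analogue, and the coprimality of the auxiliary $G,H$ in case \ref{JKS:C4}).

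The routine part is the expansion of $gh$ and the reduction of $T$ modulo $p$, including the sign bookkeeping that distinguishes odd $p$ from $p=2$ in $b_1$ and $b_2$. The genuinely delicate point, and the main obstacle, is exactly these power-of-$p$ multiplicity cases: because the repeated factor is wild, a single pass of Dedekind's criterion does not transparently yield a separability statement, and one must either track the first coefficient of $(x^{n'}+B)^{p^{j}}$ that survives division by $p$ (which is what $b_1$ encodes) or, equivalently, invoke Ore's theorem to read off the $\phi$-Newton polygon and verify that its attached residual polynomials are separable. Proving that this separability is captured \emph{exactly} by the non-vanishing of the stated resultant-type expressions — with no spurious extra hypotheses — is the technical heart of the argument, whereas the tame case \ref{JKS:C5} and the trivial case \ref{JKS:C1} are comparatively routine.
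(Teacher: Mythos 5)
First, a point of comparison: this paper does not prove Theorem \ref{Thm:JKS2} at all --- it is imported verbatim from Jakhar--Khanduja--Sangwan \cite{JKS2} and used as a black box, so your proposal can only be measured against the original argument in \cite{JKS2}, not against anything in this paper. That said, your overall strategy --- reduce $p\nmid\left[\Z_K:\Z[\theta]\right]$ to $p$-maximality of $\Z[\theta]$ via \eqref{Eq:Dis-Dis}, then run Dedekind's criterion on each possible shape of $\overline{\mathfrak F}$, with Newton-polygon (Ore) input for the wildly repeated factors --- is the standard route and is essentially the one taken in \cite{JKS2}, which works with $\phi$-Newton polygons and residual polynomials. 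Your two ``immediate'' cases are sound, with one caveat: in case \ref{JKS:C5} the claim that every repeated factor of $\overline{\mathfrak F}$ has multiplicity exactly $2$ needs $p\nmid n(n-m)$ in addition to $p\nmid ABm$; this does follow, but only because $p\mid\Delta(\mathfrak F)$ together with Swan's formula (Theorem \ref{Thm:Swan}) rules out $p\mid n$ and $p\mid (n-m)$ in this case, and your sketch never says so.

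The genuine gap is that cases \ref{JKS:C2}, \ref{JKS:C3} and \ref{JKS:C4} --- which you yourself label ``the technical heart'' --- are described but not proved. Everything that makes the theorem nontrivial lives there: one must show that $\gcd\bigl(\overline T,\,x^{n'}+\overline B\bigr)=1$ is \emph{equivalent} to the displayed arithmetic condition, which requires (a) computing the interior coefficients $\binom{p^j}{i}/p$ modulo $p$ (these vanish unless $p^{j-1}\mid i$, and the surviving ones must be identified exactly), (b) evaluating the resulting $\overline T$ at the roots of the binomial factor, and (c) explaining why the answer organizes itself in terms of $n_1=n/d_0$ and $m_1=m/d_0$ rather than $n$ and $m$ --- the role of $d_0=\gcd(n,m)$ is invisible in your sketch, yet it is exactly what distinguishes the stated conditions from naive ones. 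Writing that the gcd computation ``degenerates to the displayed expression'' asserts the theorem rather than proving it; the same goes for the claim that the tame case ``collapses to'' condition \ref{JKS:C5}, where the passage from multiplicity-$2$ factors to the statement $p^2\nmid\bigl(B^{n_1-m_1}n_1^{n_1}-(-1)^{m_1}A^{n_1}m_1^{m_1}(m_1-n_1)^{n_1-m_1}\bigr)$ still needs an argument linking the index to the $p$-adic valuation of that specific factor. As it stands, the proposal is a correct plan with correct easy cases, but the equivalences \ref{JKS:C2}--\ref{JKS:C4}, i.e.\ the actual content of \cite{JKS2}, are still owed.
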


The next theorem is an application of Theorem \ref{Thm:JKS2} to the trinomials $f(x)$ in \eqref{Basic defs}.
\begin{thm}\label{Thm:General}
Let $k\in \{1,2\}$, and suppose that $f(x)=x^6+Ax^{2k}+B$ is irreducible over $\Q$. Then
\begin{equation*}\label{Eq:Delf}
\Delta(f)=-64B^{2k-1}\delta^2
\end{equation*} by Corollary \ref{Cor:Swan}. Let $K=\Q(\theta)$, where $f(\theta)=0$, and let $\Z_K$ denote the ring of integers of $K$. A prime factor $p$ of $\Delta(f)$ does not divide $\left[\Z_K:\Z[\theta]\right]$ if and only if $p$ satisfies one of the following conditions:
\begin{enumerate}
  \item \label{C1} when $p\mid A$ and $p\mid B$, then $p^2\nmid B$;
  \item \label{C2} when $p\mid A$ and $p\nmid B$, then $p\in \{2,3\}$ such that 
    \begin{align*}
    (A \mmod{4},\ B \mmod{4})& \in \{(0,1),\ (2,3)\} \quad \mbox{if $p=2$, and}\\
   (A \mmod{9},\ B \mmod{9}) & \in \RR_k\quad \mbox{if $p=3$,}
    \end{align*}
    $\begin{array}{cl}
   \mbox{where} \ \RR_k&\hspace*{-.1in}=\{(0,2),\ (0,4),\ (0,5),\ (0,7),\\
    &\qquad (3,1),\ (3,2),\ (3,8),\ (3,10-3k),\\
     & \qquad \qquad (6,1),\ (6,5),\ (6,8),\ (6,3k+1)\};
     \end{array}$
    \item \label{C3} when $p\nmid A$ and $p\mid B$, then $p\nmid \left[\Z_K:\Z[\theta]\right]$ if $p\ne 2$ and 
    \begin{align*}
  (A \mmod{4},\ B \mmod{4}) & =(3,\ 2) \quad \mbox{if $p=2$};
  \end{align*}
  \item \label{C4} when $2\nmid AB$, then $2\nmid \left[\Z_K:\Z[\theta]\right]$;
  \item \label{C5} when $p\nmid 2AB$, then $p^2\nmid \delta$. 
   \end{enumerate}
\end{thm}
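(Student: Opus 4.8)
The goal is Theorem~\ref{Thm:General}, which specializes the general trinomial criterion of Jakhar--Khanduja--Sangwan (Theorem~\ref{Thm:JKS2}) to $f(x)=x^6+Ax^{2k}+B$. Here $n=6$, $m=2k$, so $d_0=\gcd(6,2k)=2$ for $k\in\{1,2\}$ (since $\gcd(6,2)=2$ and $\gcd(6,4)=2$), giving $n_1=3$, $m_1=k$. The strategy is simply to run through the five cases \ref{JKS:C1}--\ref{JKS:C5} of Theorem~\ref{Thm:JKS2} with these values substituted, and to simplify the resulting arithmetic conditions into the clean congruence statements of the target theorem. Case~\ref{C1} ($p\mid A$, $p\mid B$) is immediate: condition \ref{JKS:C1} reads verbatim $p^2\nmid B$, so nothing is required beyond transcription. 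Case~\ref{C5} ($p\nmid 2AB$, noting $p\nmid m=2k$ here automatically since $p\ne 2$ and if $p=3$ then $p\nmid m$ as $m\in\{2,4\}$) invokes \ref{JKS:C5}: the expression $B^{n_1-m_1}n_1^{n_1}-(-1)^{m_1}A^{n_1}m_1^{m_1}(m_1-n_1)^{n_1-m_1}$ must be computed for each $k$ and shown to be, up to sign and a unit, the quantity $\delta=4A^3+27B^{3-k}$, so that condition \ref{JKS:C5} becomes exactly $p^2\nmid\delta$.

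First I would dispatch the two straightforward cases. For Case~\ref{C5}, substituting $n_1=3$, $m_1=k$: when $k=1$ the bracket is $B^2\cdot27-(-1)A^3\cdot1\cdot(-2)^2=27B^2+4A^3=\delta$; when $k=2$ it is $B\cdot27-(1)A^3\cdot4\cdot(-1)=27B+4A^3=\delta$. In both cases the bracketed quantity equals $\delta$ (note $3-k$ gives the correct power of $B$), so \ref{JKS:C5} is precisely $p^2\nmid\delta$. Case~\ref{C1} needs no work. This leaves the three genuinely computational cases involving the prime $2$ and the prime $3$.

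The main work is in Cases~\ref{C2}, \ref{C3}, \ref{C4}, where the prime $p$ divides exactly one of $A,B$ or neither but divides $m=2k$. For Case~\ref{C4} ($2\nmid AB$), since $2\mid m$ one is in branch \ref{JKS:C4} with $p=2$; here one must write $6=s'2^r$, $2k=s2^r$, compute $G(x)$ and $H(x)$ modulo $2$, and verify they are always coprime mod $2$, yielding the unconditional conclusion. For Cases~\ref{C2} and \ref{C3} the real content is translating the divisibility conditions \ref{JKS:C2} and \ref{JKS:C3} into the explicit residue tables. In Case~\ref{C2} ($p\mid A$, $p\nmid B$) one must first show that only $p\in\{2,3\}$ can possibly divide the index: for any prime $p\mid A$ with $p\ge5$, the condition in \ref{JKS:C2} is automatically satisfied (the term $(-B)^{m_1}a_2^{n_1}+(-b_1)^{n_1}$ is a unit mod $p$), so such primes never contribute. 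Then for $p=2$ and $p=3$ one computes $a_2=A/p$, the auxiliary $b_1=\bigl(B+(-B)^{p^j}\bigr)/p$ with $p^j\|6$ (so $j=1$ for both $p=2$ and $p=3$), and works out exactly which residue classes $(A\bmod p^2,\ B\bmod p^2)$ — i.e.\ mod $4$ and mod $9$ — satisfy the JKS condition. This is where the sets $\{(0,1),(2,3)\}$ and $\RR_k$ come from. Case~\ref{C3} ($p\nmid A$, $p\mid B$) is handled similarly via condition \ref{JKS:C3}, with $a_1=\bigl(A+(-A)^{p^l}\bigr)/p$ and $b_2=B/p$, and here the analysis shows $p\ne2$ always works while $p=2$ requires $(A\bmod4,B\bmod4)=(3,2)$.

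\emph{Expected obstacle.} The hardest and most error-prone part will be Cases~\ref{C2} and \ref{C3} at $p=3$, producing the twelve-element set $\RR_k$ with its $k$-dependent entries $(3,10-3k)$ and $(6,3k+1)$. The $k$-dependence enters because $m_1=k$ appears in the exponents of both the JKS expression and the definition of $b_1$, so the congruence analysis mod $9$ must be carried out separately for $k=1$ and $k=2$ and the results reconciled into the single unified table. Verifying that the JKS condition \ref{JKS:C2}/\ref{JKS:C3} is equivalent to membership in $\RR_k$ requires a careful finite check over all residues $(A\bmod9,B\bmod9)$ with $3\mid A$, $3\nmid B$ — a bounded but delicate computation where sign conventions in $(-B)^{m_1}a_2^{n_1}+(-b_1)^{n_1}$ and the exact value of $b_1$ mod $3$ must be tracked precisely. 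I would organize this as a case split on $A\bmod9\in\{0,3,6\}$ and tabulate the admissible $B\bmod9$ for each, confirming agreement with $\RR_k$.
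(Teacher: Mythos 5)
Your plan follows exactly the paper's route: substitute $n_1=3$, $m_1=k$ into Theorem \ref{Thm:JKS2} and work through its five conditions one at a time, with conditions \ref{C1}, \ref{C4} and \ref{C5} handled just as in the paper (your check that the quantity in \ref{JKS:C5} equals $\delta$ for both values of $k$ is correct, and is in fact more explicit than the paper, which merely asserts that \ref{JKS:C5} carries over), and with the $p\in\{2,3\}$ tables in conditions \ref{C2} and \ref{C3} deferred to a finite residue check mod $4$ and mod $9$, exactly as the paper does. So the approach is the same; the problems are two specific claims inside your plan.

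The minor one: to discard primes $p\ge 5$ with $p\mid A$, $p\nmid B$ in condition \ref{C2}, you argue that $(-B)^{m_1}a_2^{n_1}+(-b_1)^{n_1}$ is automatically a unit mod $p$. That argument cannot be run, because $b_1$ is not even defined for such $p$: its definition requires $p^j\mid\mid 6$ with $j\ge 1$, i.e. $p\in\{2,3\}$. The correct reason (the paper's) is that the requirement $j\ge 1$ itself forces $p\in\{2,3\}$; equivalently, a prime with $p\mid A$, $p\nmid B$, $p\nmid 6$ cannot divide $\Delta(f)$ at all, since $f\equiv x^6+B\pmod{p}$ is separable, so such $p$ is excluded by the theorem's hypothesis. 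The genuine gap is in condition \ref{C3}: you claim ``the analysis shows $p\ne 2$ always works.'' It does not. For odd $p$ with $p\mid B$, $p\nmid A$ we have $p^l\mid\mid (6-2k)$ with $l=0$, hence $a_1=\bigl(A+(-A)\bigr)/p=0$; the second alternative in \ref{JKS:C3} then fails (it requires $p\nmid a_1$), and the first alternative holds if and only if $p\nmid b_2=B/p$, i.e. $p^2\nmid B$. So an odd prime with $p^2\mid B$ \emph{does} divide the index, and the criterion you must establish for $p\ne 2$ is $p^2\nmid B$, not vacuous truth. (The wording of condition \ref{C3} is admittedly loose --- for $p\ne 2$ the displayed ``criterion'' is the conclusion itself --- but the proof has to supply the real criterion, as the paper's does when it says the first statement under \ref{JKS:C3} must hold; this requirement is also precisely what Corollary \ref{Cor:B squarefree}, that monogenicity forces $B$ to be squarefree, relies on.) As written, your case-\ref{C3} analysis would let non-squarefree $B$ pass, and the ``if and only if'' of the theorem would fail.
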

\begin{proof}
Let $p$ be a prime divisor of $\Delta(f)$.
Note that conditions \eqref{C1} and \eqref{C5} remain the same as conditions \ref{JKS:C1} and \ref{JKS:C5} of Theorem \ref{Thm:JKS2}.

For condition \eqref{C2}, we 
suppose that $p\mid A$ and $p\nmid B$.
Then, from condition \ref{JKS:C2} of Theorem \ref{Thm:JKS2}, we have that
  $p^j\mid \mid 6$ with $j\ge 1$. Hence, $p\in \{2,3\}$ with $j=1$ for each of these primes, $a_2=A/p$ and
 \[b_1=\left\{\begin{array}{cl}
   \frac{B+B^2}{2} & \mbox{if $p=2$}\\[.5em]
   \frac{B-B^3}{3} & \mbox{if $p=3$.}
 \end{array}\right.\]

 Observe that, in Theorem \ref{Thm:JKS2}, we have for each value of $k\in \{1,2\}$ that $n_1=3$ and $m_1=k$.
 Then, it is easy to calculate, in each of the two cases $p\in \{2,3\}$ with $k\in \{1,2\}$, the congruence classes of $A$ and $B$ modulo $p^2$ for which
 \[\mbox{either } \ p\mid a_2 \ \mbox{and} \ p\nmid b_1,\quad \mbox{or } \ p\nmid a_2\left((-B)^{k}a_2^{3}-b_1^{3}\right).\] 

 For condition \eqref{C3}, suppose that $p\nmid A$ and $p\mid B$. Then, from condition \ref{JKS:C3} of Theorem \ref{Thm:JKS2}, we have that $p^l\mid \mid (6-2k)$, which implies that either $\ell=0$ if $p\ge 3$, in which case the first statement under condition \ref{JKS:C3} of Theorem \ref{Thm:JKS2} must be true for monogenicity, or $p=2$ and $p^l=2^{3-k}$. In this latter case, we have
 \[a_1=\frac{A^{2^{3-k}}+A}{2}\equiv \left\{ \begin{array}{cl}
  1 \pmod{2} & \mbox{if $A\equiv 1 \pmod{4}$,}\\[.25em]
  0 \pmod{2} & \mbox{if $A\equiv 3 \pmod{4}$.}
 \end{array}\right.\] Since $m=2k>1$, we observe that $4\nmid B$ so that $2\nmid b_2$; otherwise, condition \ref{JKS:C3} of Theorem \ref{Thm:JKS2} fails. It follows that we must have $(A \mmod{4}, \ B \mmod{4})=(3,2)$.

 Finally, for condition \eqref{C4}, suppose that $p\nmid AB$ and $p\mid m$. Thus, $p=2$ since $p\mid 2k$ with $k\in \{1,2\}$. Hence, the possibilities of $(A\mmod{4},\ B\mmod{4})$ are
 \begin{equation}\label{Eq:AB-Congruence classes mod 4}
   (A\mmod{4},\ B\mmod{4})\in \{(1,1),\ (1,3),\ (3,1), \ (3,3)\}
 \end{equation} since $2\nmid AB$. From condition \ref{JKS:C4} of Theorem \ref{Thm:JKS2}, we see that
 \[G(x)=x^3+Ax^k+B \quad \mbox{and} \quad H(x)=\left(\frac{A^2+A}{2}\right)x^{2k}+ABx^k+\left(\frac{B^2+B}{2}\right).\] Straightforward calculations reveal that $G(x)$ and $H(x)$ are coprime in $\F_2[x]$ for all possibilities of $(A\mmod{4},\ B\mmod{4})$ in \eqref{Eq:AB-Congruence classes mod 4}. Consequently, the prime $p=2$ satisfies condition \eqref{C4} when $2\nmid AB$. That is, $2\nmid \left[\Z_K:\Z[\theta]\right]$ when $2\nmid AB$.
\end{proof}

The following corollary, which follows immediately from conditions \eqref{C1} and \eqref{C3} of Theorem \ref{Thm:General}, will be useful in the proof of Theorem \ref{Thm:Main}. 

\begin{cor}\label{Cor:B squarefree}
  If $f(x)$ is monogenic, then $B$ is squarefree.
\end{cor}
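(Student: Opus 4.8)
The plan is to prove Corollary \ref{Cor:B squarefree} by contrapositive: I will assume that $B$ is not squarefree and show that $f(x)$ cannot be monogenic. So suppose there exists a prime $p$ with $p^2 \mid B$. Since $B \mid \Delta(f) = -64B^{2k-1}\delta^2$ (and $2k-1 \ge 1$), such a prime $p$ is necessarily a divisor of $\Delta(f)$, so Theorem \ref{Thm:General} applies to $p$. The strategy is to locate which of the mutually exclusive cases \eqref{C1}--\eqref{C5} the prime $p$ falls into, based on whether or not $p$ divides $A$, and then to exhibit the failure of the relevant monogenicity criterion.

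First I would split according to whether $p \mid A$. If $p \mid A$ and $p \mid B$, then we are in case \eqref{C1} of Theorem \ref{Thm:General}, whose condition for $p \nmid [\Z_K:\Z[\theta]]$ is precisely $p^2 \nmid B$. Since we have assumed $p^2 \mid B$, this condition fails, so $p \mid [\Z_K:\Z[\theta]]$, whence $\Z_K \ne \Z[\theta]$ and $f(x)$ is not monogenic by \eqref{MonoCondition}. If instead $p \nmid A$ while $p \mid B$ (with $p^2 \mid B$), then we are in case \eqref{C3}. Here I would simply note that case \eqref{C3} guarantees $p \nmid [\Z_K:\Z[\theta]]$ only under the hypothesis $p \ne 2$ together with the stated congruence at $p=2$; but the condition to be checked from Theorem \ref{Thm:JKS2}\ref{JKS:C3} requires $p \nmid b_2^{m-1}(\cdots)$ where $b_2 = B/p$, and $p^2 \mid B$ forces $p \mid b_2$, so with $m = 2k > 1$ the factor $b_2^{m-1}$ is divisible by $p$ and the alternative also fails.

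The cleanest route, and the one I expect to be most robust, is to work directly from the conditions as packaged in Theorem \ref{Thm:General} rather than reopening Theorem \ref{Thm:JKS2}. In the case $p \mid A$, case \eqref{C1} is immediate as above. In the case $p \nmid A$, $p \mid B$, the prime falls under \eqref{C3}; the key observation is that the monogenicity requirement from \ref{JKS:C3} cannot hold when $p^2 \mid B$, because $b_2 = B/p$ is then divisible by $p$, which kills both branches of the disjunction (the first branch needs $p \nmid b_2$, and the second needs $p \nmid b_2^{m-1}(\cdots)$, but $m-1 = 2k-1 \ge 1$). In either case $p \mid [\Z_K:\Z[\theta]]$, so $f$ is not monogenic.

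The main obstacle, and the only place requiring genuine care, is the case $p \nmid A$, $p \mid B$: one must confirm that the factor $b_2^{m-1}$ appearing in \ref{JKS:C3} of Theorem \ref{Thm:JKS2} is indeed divisible by $p$ when $p^2 \mid B$, and that the first disjunct (which would require $p \nmid b_2$) is likewise obstructed. Since $m = 2k \ge 2$ in our setting, we always have $m - 1 \ge 1$, so this exponent is positive and the argument closes uniformly in $k$. Everything else reduces to reading off the stated cases, so the corollary follows. \qed
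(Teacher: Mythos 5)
Your proof is correct, and its skeleton is the paper's own: the paper obtains the corollary ``immediately from conditions \eqref{C1} and \eqref{C3} of Theorem \ref{Thm:General},'' which is precisely your case split on $p \mid A$ versus $p \nmid A$ for a prime $p$ with $p^2 \mid B$. The one place you go beyond the paper is in fact necessary and worth keeping: condition \eqref{C3}, read literally as stated in Theorem \ref{Thm:General}, places no obstruction on an odd prime $p$ with $p \nmid A$ and $p^2 \mid B$ (for $p \ne 2$ it simply asserts $p \nmid \left[\Z_K:\Z[\theta]\right]$), so citing the packaged statement alone would not close that case. Your descent to condition \ref{JKS:C3} of Theorem \ref{Thm:JKS2} --- where $p \mid b_2$ kills both branches of the disjunction, the first because it demands $p \nmid b_2$ and the second because $b_2^{m-1}$ with $m-1 = 2k-1 \ge 1$ makes the product divisible by $p$ --- is exactly the missing content, and it agrees with what the paper's proof of \eqref{C3} actually establishes (there, for odd $p$, the ``first statement'' of \ref{JKS:C3} must hold, which forces $p \nmid b_2$, i.e., $p^2 \nmid B$). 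So your argument is the paper's intended one, carried out at the level of detail that the statement of \eqref{C3} genuinely requires.
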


\section{The Proof of Theorem \ref{Thm:Main}}\label{Section:Main3Proof}
We first prove a lemma. 
\begin{lemma}\label{Lem:h reducible} 
Suppose that $f(x)$ is irreducible over $\Q$. If $h(x)$ is reducible over $\Q$, then $\Delta(g)<0$.  
\end{lemma}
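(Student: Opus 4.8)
The plan is to reduce the claim about the sign of $\Delta(g)$ to a statement about the real roots of the cubic $\widehat{h}(x)$, exploiting that $h(x)=\widehat{h}(x^2)$. First I would record the sign relation between the two discriminants: applying Swan's formula (Theorem~\ref{Thm:Swan}) to the trinomial $\widehat{h}$ and comparing with $\Delta(g)=-B^{k-1}\delta$ from Corollary~\ref{Cor:Swan}, a direct computation in each case $k\in\{1,2\}$ gives $\Delta(\widehat{h})=B^2\Delta(g)$. Since $B\ne 0$, it therefore suffices to prove $\Delta(\widehat{h})<0$, that is, that $\widehat{h}$ has exactly one real root. By Lemma~\ref{Lem:widehat(h)}, $\widehat{h}$ is irreducible over $\Q$, hence separable with three distinct nonzero roots $\gamma_1,\gamma_2,\gamma_3$ and $\Delta(\widehat{h})\ne 0$; so the whole proof amounts to ruling out the possibility that $\gamma_1,\gamma_2,\gamma_3$ are all real.

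So assume for contradiction that $\gamma_1,\gamma_2,\gamma_3\in\R$. The constant term of $\widehat{h}$ is $-B^2$, so $\gamma_1\gamma_2\gamma_3=B^2>0$ and the number of negative $\gamma_\ell$ is even; thus either all three are positive or exactly two are negative. The all-positive case is excluded by a vanishing symmetric function of the roots: when $k=1$ the trinomial is $\widehat{h}(x)=x^3-Ax^2-B^2$, whose zero $x$-coefficient forces $\gamma_1\gamma_2+\gamma_1\gamma_3+\gamma_2\gamma_3=0$, and when $k=2$ it is $\widehat{h}(x)=x^3+ABx-B^2$, whose zero $x^2$-coefficient forces $\gamma_1+\gamma_2+\gamma_3=0$; neither identity can hold for three positive real numbers.

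It remains to eliminate the case $\gamma_1>0>\gamma_2,\gamma_3$, and this is the crux of the argument, where the explicit factorization of Lemma~\ref{Lem:HJSextic} is essential. Since $f$ is irreducible and $h$ is reducible, that lemma gives $h(x)=p_+(x)p_-(x)$ with $p_\pm(x)=x^3\pm\mu x^2+cx\pm B$ and $c=(\mu^2+(k-2)A)/2$, and one checks directly that $p_-(x)=-p_+(-x)$. As $h=\widehat{h}(x^2)$ is separable with root set $\{\pm\sqrt{\gamma_\ell}\}_{\ell}$ and the roots of $p_-$ are exactly the negatives of those of $p_+$, the cubic $p_+$ must contain precisely one of $\pm\sqrt{\gamma_\ell}$ for each $\ell$; hence its roots $\rho_1,\rho_2,\rho_3$ satisfy $\rho_\ell^2=\gamma_\ell$ after relabeling. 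In the present case $\rho_1$ is real while $\rho_2,\rho_3$ are purely imaginary (as $\gamma_2,\gamma_3<0$). But $p_+\in\Q[x]$ has real coefficients, so its two non-real roots must be complex conjugates, giving $\rho_3=\overline{\rho_2}=-\rho_2$ and therefore $\gamma_3=\rho_3^2=\rho_2^2=\gamma_2$, contradicting the distinctness of the roots of $\widehat{h}$. With both subcases eliminated, $\widehat{h}$ has exactly one real root, so $\Delta(\widehat{h})<0$ and finally $\Delta(g)=\Delta(\widehat{h})/B^2<0$. The main obstacle is precisely this last subcase: ruling it out requires the rigid shape $p_-(x)=-p_+(-x)$ of the factors, which is exactly what Lemma~\ref{Lem:HJSextic} supplies.
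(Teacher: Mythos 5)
Your proof is correct, and it takes a genuinely different route from the paper's. Both arguments start identically—$f$ irreducible gives $\widehat{h}$ irreducible (Lemma~\ref{Lem:widehat(h)}), and reducibility of $h$ activates Lemma~\ref{Lem:HJSextic}—but the paper uses that lemma only for the \emph{existence} of the integer zero $\mu$ of the quartic $M(x)$ in \eqref{M}: assuming $\Delta(g)>0$, it computes $\Delta(M)=2^{12}B^2\Delta(g)>0$, concludes that $M$ has four distinct real zeros (it already has the real zero $\mu$), factors $M(x)=(x-\mu)Q(x)$ explicitly, notes that $Q$ must then have three distinct real zeros so that $\Delta(Q)>0$, and contradicts this by exhibiting $\Delta(Q)<0$ through a completed square. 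You instead never leave the cubic $\widehat{h}$: the identity $\Delta(\widehat{h})=B^2\Delta(g)$ does check out against Theorem~\ref{Thm:Swan} in both cases $k\in\{1,2\}$, and it reduces the lemma to showing $\widehat{h}$ cannot have three real roots. Your all-positive configuration is correctly killed by the vanishing coefficient ($e_2=0$ when $k=1$, $e_1=0$ when $k=2$), and the one-positive/two-negative configuration is correctly killed by the antisymmetry $p_-(x)=-p_+(-x)$ of the factorization in Lemma~\ref{Lem:HJSextic}: separability of $h$ (which follows from irreducibility of $\widehat{h}$ and $B\ne 0$) forces $p_+$ to contain exactly one root from each pair $\{\pm\sqrt{\gamma_\ell}\}$, and then real coefficients force its two purely imaginary roots to be conjugate, giving $\gamma_2=\gamma_3$, a contradiction; both the pigeonhole step and the conjugation step are fully justified as you state them. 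The trade-off between the two proofs: your route avoids the heavier discriminant computations \eqref{Delta(M)} and \eqref{Delta(Q)} (and with them the appeal to the quadrinomial discriminant formula of \cite{G}), arguing structurally about root configurations instead; the paper's route uses only the bare existence of $\mu$ rather than the precise shape of the factorization, but pays for that economy with explicit algebra on $M$ and $Q$.
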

\begin{proof}
 Since $f(x)$ is irreducible, then $\widehat{h}(x)$ is irreducible from Lemma \ref{Lem:widehat(h)}. Then,  since $h(x)$ is reducible, it follows from Lemma \ref{Lem:HJSextic} that $M(\mu)=0$ for some $\mu\in \Z$. 
    Calculating $\Delta(M)$, using \cite[Theorem 3.1]{G} when $k=1$, and Theorem \ref{Thm:Swan} when $k=2$, reveals that
      \begin{equation}\label{Delta(M)}
      \Delta(M)=-2^{12}B^{k+1}(4A^3+27B^{3-k})=2^{12}B^2\Delta(g).
      \end{equation}
       Note that $g(x)$ is irreducible, since $f(x)$ is irreducible, so that $\Delta(g)\ne 0$. Assume, by way of contradiction, that $\Delta(g)>0$. Thus, $\Delta(M)>0$, from \eqref{Delta(M)}. Consequently, $M(x)$ has four distinct real zeros since $\mu\in \R$. Note that since $A\ne 0$, it follows from \eqref{M} that $\mu \ne 0$ for either value of $k$, and $\mu \ne -A/2$ if $k=2$. Observing from \eqref{M} that 
       \[\begin{array}{cl}
       \mu^3-2A\mu-8B=-A^2/\mu & \mbox{if $k=1$}\\[.5em]
       \mbox{and}\\[.5em]
       \mu^3-8B=\mu^3A/(A+2\mu) & \mbox{if $k=2$,}
       \end{array}\]
      straightforward calculations show that $M(x)=(x-\mu)Q(x)$, where \[Q(x)=\left\{\begin{array}{cl}
        x^3+\mu x^2+(\mu^2-2A)x-A^2/\mu & \mbox{if $k=1$}\\[.5em]
        x^3+\mu x^2+\mu^2x+\mu^3A/(A+2\mu) & \mbox{if $k=2$.}
      \end{array}\right.\]   Calculating $\Delta(Q)$ using \cite[Theorem 3.1]{G}, or Theorem \ref{Thm:Swan} if $\mu^2=2A$ and $k=1$, yields
       \begin{equation}\label{Delta(Q)}
       \Delta(Q)=\left\{\begin{array}{cl}
        -(3\mu^4-14A\mu^2+27A^2)(-\mu^2+A)^2/\mu^2 & \mbox{if $k=1$}\\[.5em]
        -4\mu^6(3\mu^2-4A\mu+4A^2)/(A+2\mu)^2 & \mbox{if $k=2$.}
      \end{array}\right.
       \end{equation} Since $Q(x)$ has three distinct real zeros, then $\Delta(Q)>0$, which implies from \eqref{Delta(Q)} that
      \[-(3\mu^4-14A\mu^2+27A^2)>0 \ \mbox{if $k=1$, and} \ -(3\mu^2-4A\mu+4A^2)>0 \ \mbox{if $k=2$}.\]  However,
      \[-(3\mu^4-14A\mu^2+27A^2)=-(3(\mu^2-7A/3)^2+32A^2/3)<0\] and
      \[-(3\mu^2-4A\mu+4A^2)=-(3(\mu-2A/3)^2+8A^2/3)<0.\] These contradictions complete the proof that $\Delta(g)<0$.
\end{proof}

The following corollary of Lemma \ref{Lem:h reducible} indicates that the list of possible Galois groups for $f(x)$ is slightly smaller than the list given in Table \ref{T1} for an arbitrary even sextic polynomial.
\begin{cor}\label{Cor:ShortList}
Suppose that $f(x)$ is irreducible over $\Q$.
Then $\Gal(f)\not \simeq C_6$.
\end{cor}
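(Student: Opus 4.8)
The plan is to combine the group-theoretic criterion from Theorem~\ref{Thm:AJTri} with the sign constraint supplied by Lemma~\ref{Lem:h reducible}, arguing by contradiction. By Theorem~\ref{Thm:AJTri}, an irreducible $f(x)$ satisfies $\Gal(f)\simeq C_6$ precisely when $-B$ is not a square, $\Delta(g)$ is a square, and $h(x)$ is reducible. So I would suppose $\Gal(f)\simeq C_6$ and deduce that all three of these conditions hold simultaneously, then show that two of them cannot coexist.

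The key observation is that the ``$\Delta(g)$ is a square'' requirement clashes with the reducibility of $h(x)$. Indeed, since $\Gal(f)\simeq C_6$ forces $h(x)$ to be reducible, Lemma~\ref{Lem:h reducible} gives $\Delta(g)<0$. But a perfect square in $\Q$ (equivalently, in $\Z$, after clearing the $-B^{k-1}$ factor exhibited in Corollary~\ref{Cor:Swan}) is necessarily nonnegative, so a negative value of $\Delta(g)$ can never be a square. This contradicts the requirement that $\Delta(g)$ be a square, and hence $\Gal(f)\not\simeq C_6$.

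There is essentially no analytic or combinatorial obstacle here: the corollary is a one-line deduction once Lemma~\ref{Lem:h reducible} has been established, and indeed the substantive work (the discriminant computation of $M(x)$ and $Q(x)$, together with the completing-the-square inequalities showing $\Delta(Q)$ cannot be positive) already lives inside that lemma. The only point requiring a word of care is to confirm that ``square'' in Theorem~\ref{Thm:AJTri} means the square of a rational number, so that negativity genuinely rules it out; this is the standard convention in the resolvent-based determination of Galois groups of even sextics, and it is exactly the reading under which the contradiction closes.
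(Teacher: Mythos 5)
Your proof is correct and follows exactly the paper's own argument: invoke Theorem~\ref{Thm:AJTri} to get that $\Gal(f)\simeq C_6$ forces $h(x)$ reducible and $\Delta(g)$ a square, then apply Lemma~\ref{Lem:h reducible} to conclude $\Delta(g)<0$, a contradiction. Nothing further is needed.
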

\begin{proof}
 We proceed by way of contradiction, assuming that $\Gal(f)\simeq C_6$. Then, from Theorem \ref{Thm:AJTri}, we have that:
  \begin{equation}\label{C6 conditions}
    -B \ \mbox{is not a square}, \quad \Delta(g) \ \mbox{is a square,} \quad  \mbox{and} \ h(x) \ \mbox{is reducible}.
  \end{equation} Since $h(x)$ is reducible, we have from Lemma \ref{Lem:h reducible} that $\Delta(g)<0$, which contradicts the fact that $\Delta(g)$ is a square.
   \end{proof}
     \begin{rem}
       It is easy to verify that the groups listed in Table \ref{T1}, with the exception of $C_6$,  actually do occur as Galois groups of $f(x)$.
     \end{rem}

\begin{proof}[Proof of Theorem \ref{Thm:Main}]
We address the Galois groups, one at a time, as presented in the statement of the theorem. 
It will sometimes be convenient to handle the cases $k=1$ and $k=2$ separately. Recall that 
 \begin{align}\label{k=1}
 \begin{split}
  f(x) & =x^6+Ax^2+B,\\
  \delta & =4A^3+27B^2,\\
  \Delta(f) & =-2^6B\delta^2 \ \mbox{and}\\
  \Delta(g) & =-\delta,
  \end{split}
\end{align} 
when $k=1$, while 
 \begin{align}\label{k=2}
 \begin{split}
  f(x) & =x^6+Ax^4+B,\\
  \delta & =4A^3+27B,\\
  \Delta(f) & =-2^6B^3\delta^2 \ \mbox{and}\\
  \Delta(g) & =-B\delta,
  \end{split}
\end{align} 
when $k=2$.

 \subsection*{{\bf The Case} $\mathbf{C_6}$}
 By Corollary \ref{Cor:ShortList}, it follows that no monogenic trinomials $f(x)$ exist with $\Gal(f)\simeq C_6$.

\subsection*{{\bf The Case} $\mathbf{S_3}$} Assume, by way of contradiction, that $f(x)$ is monogenic with  $\Gal(f)\simeq S_3$.
By Theorem \ref{Thm:AJTri}, we have:
\begin{equation}\label{AJ conditions S3}
-B \ \mbox{is not a square}, \quad -B\Delta(g) \ \mbox{is a square}, \quad h(x) \ \mbox{is reducible.}
\end{equation}

We first address the case $k=1$ in \eqref{k=1}. 
Suppose that $p$ is a prime divisor of $B$. Then $p\mid \mid B$ by Corollary \ref{Cor:B squarefree}, which implies that $p\mid \delta$ since $-B\Delta(g)=B\delta$ is a square. If $p\not \in \{2,3\}$, then $p^3\mid \mid B\delta$, which contradicts the fact that $B\delta$ is a square. Hence, $p\in \{2,3\}$. Consequently,
\begin{equation}\label{B}
\abs{B}\in \{1,2,3,6\},
\end{equation} since $B$ is squarefree. Since $-B\Delta(g)$ is a square, we can write
\[y^2=-B\Delta(g)=B(4A^3+27B^2),\]
for some $y\in \Z$,
 so that
\begin{equation}\label{k=1 y^2}
  (4By)^2=(4AB)^3+(16)(27)B^5.
\end{equation}
It follows then from \eqref{k=1 y^2} that, for each of the eight values of $B$ in \eqref{B}, we need to determine the integral points $(X,Y)$ on the elliptic curve
\[Y^2=X^3+2^43^3B^5,\] such that $X=4AB$ for some integers $A$ and $Y$ with $AY\ne 0$. 
To do this, we use Sage and we provide the results of these calculations in Table \ref{T2}.
\begin{table}[h]
 \begin{center}
\begin{tabular}{c|cccccccc}
  $B$ & $1$ & $-1$ & $2$ & $-2$ & $3$ & $-3$ & $6$ & $-6$\\ [.25em]
 $A$ & none &  $-3$ & none  & $-5$ & none & none & $-3$ & $-9$
 \end{tabular}
\end{center}
\caption{Possible values for $A$ and $B$}
 \label{T2}
\end{table}\\
From Table \ref{T2}, we get the four trinomials:
\begin{equation}\label{4tri k=1}
x^6-3x^2-1,\quad x^6-5x^2-2,\quad x^6-3x^2+6, \quad x^6-9x^2-6.
\end{equation} Observe that 
\[x^6-5x^2-2=(x^2+2)(x^4-2x^2-1).\] Straightforward methods verify that the remaining three trinomials in \eqref{4tri k=1} are irreducible over $\Q$. Then, a computer algebra system, such as Maple, Sage or Magma, in conjunction with Theorem \ref{Thm:General}, can be used to complete the proof that there exist no even monogenic trinomials $f(x)$ with $\Gal(f)\simeq S_3$ when $k=1$. The results of these computations are summarized in Table \ref{T3}.
\begin{table}[h]
 \begin{center}
\begin{tabular}{c|ccc}
  $f(x)$ & $x^6-3x^2-1$ & $x^6-3x^2+6$ & $x^6-9x^2-6$\\ [.25em]
 $\Gal(f)$ & $A_4$ & $S_3$ & $S_4^{-}$\\ [.25em]
 Monogenic & yes & no & yes
 \end{tabular}
\end{center}
\caption{Results for trinomials when $k=1$}
 \label{T3}
\end{table}

Suppose next that $k=2$ as in \eqref{k=2}. 
Since $-B\Delta(g)=B^2\delta$ is a square, we deduce that $\delta$ must be a square. Let $p$ be a prime divisor of $\delta$. Then, since $f(x)$ is monogenic, it follows from condition \eqref{C5} of Theorem \ref{Thm:General} that $p\mid 2AB$. If $p\ne 3$, then $p^3\mid \mid \delta$ by Corollary \ref{Cor:B squarefree}, which contradicts the fact that $\delta$ is a square. Hence, $p=3$, so that $3\mid AB$ and   
\begin{equation}\label{delta_2}
\delta=4A^3+27B=3^{2z},
\end{equation} for some integer $z\ge 0$. 

We consider first the two cases $z\in \{0,1\}$ in \eqref{delta_2}. 
If $z=0$, then we see that $3\nmid A$. Therefore, $3\mid B$, and we get  that 
$A^3\equiv 61 \pmod{81}$, which is impossible since $61$ is not a cube modulo 81. Suppose next that $z=1$. If $3\mid A$, then it follows from \eqref{delta_2} that
\[3\left(4(A/3)^3+B\right)=1,\] which is impossible. Thus, $3\nmid A$ and $3\mid B$, so that $A^3\equiv 63 \pmod{81}$ from \eqref{delta_2}, which is also impossible since $63$ is not a cube modulo 81. 

We have shown that $z\ge 2$ in \eqref{delta_2}. Therefore, $2z-3$ is a positive integer, and we can rewrite \eqref{delta_2} as
\begin{equation}\label{delta_2 rewrite 2}
B=3^{2z-3}-4(A/3)^3,
\end{equation}  
which implies that $A/3\in \Z$. If $z\ge 3$, then we deduce from \eqref{delta_2 rewrite 2} and Corollary \ref{Cor:B squarefree} that $3\nmid (A/3)$. Thus, $3\nmid B$ and 
\[(A \mmod{9},\ B \mmod{9})\in \{(3,5),\ (6,4)\}.\] Since $\{(3,5),\ (6,4)\} \cap \RR_2=\varnothing$, we conclude from condition \eqref{C3} of Theorem \ref{Thm:General} that $f(x)$ is not monogenic when $z\ge 3$. 

Hence, we only need to examine the situation $z=2$. Then we have 
\begin{equation}\label{z=2}
B=3-4(A/3)^3,
\end{equation} from \eqref{delta_2 rewrite 2}. Since $f(x)$ is monogenic, then $f(x)$ is irreducible over $\Q$, and so  $\widehat{h}(x)$ is irreducible over $\Q$ by Lemma \ref{Lem:widehat(h)}. Since $h(x)$ is reducible over $\Q$ from \eqref{AJ conditions S3}, we have, from Lemma \ref{Lem:HJSextic}, that  
\begin{equation}\label{M k=2}
M(x)=x^4-8Bx-4AB
\end{equation} is reducible over $\Q$. 
Let $p$ be a prime divisor of $B$. Observe from \eqref{z=2} that $p\ne 2$. If $p\nmid A$, then, since $B$ is squarefree, we get the contradiction that $M(x)$ in \eqref{M k=2} is irreducible over $\Q$ because $M(x)$ is Eisenstein with respect to $p$. Thus, $p\mid A$, and we deduce from \eqref{z=2} that $p=3$. Hence, we only need to address the cases $\abs{B}\in \{1,3\}$. From \eqref{z=2}, it is easy to see that cases $B\in \{1,-3\}$ are impossible, while the case $B=3$ yields the contradiction that $A=0$. The only viable case is then $(A,B)=(3,-1)$. However, in this case, it is straightforward to verify that $M(x)=x^4+8x+12$ is irreducible over $\Q$. This final contradiction completes the proof of the nonexistence of even monogenic trinomials $f(x)$ with $\Gal(f)\simeq S_3$.

\subsection*{{\bf The Case} $\mathbf{C_2 \times S_3}$}
Assume that $f(x)$ is monogenic with $\Gal(f)\simeq C_2\times S_3$.
By Theorem \ref{Thm:AJTri}, we have:
\begin{equation}\label{AJ conditions C2 X S3}
\mbox{neither} \ -B, \ \mbox{nor} \ \Delta(g), \ \mbox{nor} \ -B\Delta(g) \ \mbox{is a square,} \ \mbox{and} \ h(x) \ \mbox{is reducible.}
\end{equation}

Suppose first that $k=1$. Since $h(x)$ is reducible, we have from Lemma \ref{Lem:HJSextic} that 
\begin{equation}\label{M(mu) k=1 C2 X S3}
M(\mu)=\mu^4-2A\mu^2-8B\mu+A^2=0,
\end{equation} for some $\mu\in \Z$. Solving \eqref{M(mu) k=1 C2 X S3} for $A$ yields
\begin{equation}\label{Solving for A}
A=\mu^2\pm 2\sqrt{2B\mu},
\end{equation} which implies that $2B\mu$ is a nonzero square, since $A,B\in \Z$ with $AB\ne 0$. Observe from \eqref{M(mu) k=1 C2 X S3} that 
\begin{equation}\label{parity of mu and A}
\mu\equiv A\pmod{2}.
\end{equation} 
We claim that $2\mid \mu$. Suppose, by way of contradiction, that $2\nmid \mu$. Then $2\nmid A$ from \eqref{parity of mu and A}, and $2\mid B$ since $2B\mu$ is a nonzero square. Furthermore,   
\[\mu^2\equiv A^2\equiv 1\pmod{8},\] so that $2(A-1)\equiv 0 \pmod{8}$ from \eqref{M(mu) k=1 C2 X S3}. Hence,  $A\equiv 1 \pmod{4}$, which contradicts the fact that $A\equiv 3 \pmod{4}$ from condition \eqref{C3} of Theorem \ref{Thm:General}, and the claim that $2\mid \mu$ is established. 
Observe from \eqref{Solving for A} that  if $2\mid B$, then $(B/2)\mid \mu$ since $2B\mu$ is a nonzero square and $B$ is squarefree. However, since $2\mid \mu$ and $2\nmid (B/2)$, we actually have that $2(B/2)=B$ divides $\mu$. Clearly, if $2\nmid B$, then $B\mid \mu$. Therefore, $B\mid \mu$, regardless of the parity of $B$.

Let 
\[B=2^{\varepsilon}\prod_{p_i\ge 3}p_i \qquad \mbox{and} \qquad 2^e\mid \mid \mu,\]
where $\varepsilon\in \{0,1\}$, $e\ge 1$ and the $p_i$ are the odd prime divisors of $B$. Thus, since $2B\mu$ is a nonzero square, we have that
\begin{equation}\label{2Bmu}
2B\mu=2^{e+1+\varepsilon}\prod_{p_i\ge 3}p_i^2\left(\frac{\mu}{2^e\prod_{p_i\ge 3}p_i}\right),
\end{equation}
where $2\mid (e+1+\varepsilon)$ and $\mu/(2^e\prod_{p_i\ge 3}p_i)=m^2$ for some nonzero odd integer $m$. Hence, 
\begin{equation}\label{mu}
\mu=2^{e-\varepsilon}m^22^{\varepsilon}\prod_{p_i\ge 3}p_i=2^{2r-1}m^2B,
\end{equation}
 where $r=(e+1-\varepsilon)/2\ge 1$. 
Then 
\begin{equation}\label{A}
  A=2^{r+1}mB\left(2^{3r-3}m^3B\pm 1\right),
\end{equation}
from \eqref{Solving for A} and \eqref{mu}.
An easy calculation shows that 
\begin{equation}\label{newdelta}
  \delta=4A^3+27B^2=B^2\left(2^{3r-1}m^3B\pm 1\right)^2\left(2^{6r-1}m^6B^2\pm 2^{3r}5m^3B+27\right),
\end{equation} where the plus sign holds in \eqref{newdelta} if and only if the plus sign holds in \eqref{A}. Let $d=2^{3r-1}m^3B\pm 1$. Note that $\abs{d}>1$ since $r\ge 1$ and $mB\ne 0$. Let $p$ be a prime divisor of $d$, and observe that $p\nmid 2mB$. Since $f(x)$ is monogenic and $p^2\mid \delta$ from \eqref{newdelta}, it follows from condition \eqref{C5} of Theorem \ref{Thm:General} that $p\mid A$. Hence, 
\[p\mid \left(2^{3r-3}m^3B\pm 1\right),\] from \eqref{A}. Consequently, $p$ divides 
\[d-\left(2^{3r-3}m^3B\pm 1\right)=3\cdot 2^{3r-3}m^3B,\] so that $p=3$. Therefore, 
\begin{equation}\label{d}
 \abs{d}=\abs{2^{3r-1}m^3B\pm 1}=3^z, 
\end{equation} for some integer $z\ge 1$. If $z=1$ in \eqref{d}, we get that 
\[2^{3r-1}m^3B\in \left\{ \begin{array}{cl}
\{-4,2\} &  \mbox{when the plus sign holds in \eqref{d} and \eqref{A}}\\
\{-2,4\} & \mbox{when the minus sign holds in \eqref{d} and \eqref{A}.}
\end{array}\right.\] It follows that $r=1$, so that $m^3B=\pm 1$. Since $-B$ is not a square from \eqref{AJ conditions C2 X S3}, we see that $B\ne -1$. We conclude that $(m,B)=(\pm 1,1)$, where the plus sign (minus sign) holds if and only if the minus sign (plus sign) holds in \eqref{d} and \eqref{A}. Both of these scenarios yield the contradiction that $A=0$. 

Hence, we may assume that $z\ge 2$ in \eqref{d}. Thus,
\[B\mmod{9}=\frac{\mp 1}{2^{3r-1}m^3} \mmod{9}\in \{2,7\},\] which implies from \eqref{A}, regardless of the sign, that 
\begin{equation}\label{A mod 9}
A \mmod{9}=\frac{-3}{2^{2r}m^2}.
\end{equation} Let $S=\{1,4,7\}$, which is the multiplicative subgroup of squares modulo 9. Since $2^{2r} \mmod{9}\in S$, $m^2 \mmod{9}\in S$ and 
\[\frac{-3}{s_1}+\frac{3}{s_2}=-3\left(\frac{s_2-s_1}{s_1s_2}\right)\equiv 0 \pmod{9}\] for any $s_1,s_2\in S$, it follows from \eqref{A mod 9} that $A \mmod{9}=6$. Hence, 
\[(A \mmod{9}, \ B\mmod {9})\in \{(6,2), (6,7)\},\] which yields the contradiction that $f(x)$ is not monogenic by condition \eqref{C2} of Theorem \ref{Thm:General} since neither $(6,2)$ nor $(6,7)$ is an element of $\RR_1$. This completes the proof that there are no monogenic trinomials $f(x)$ with $\Gal(f)\simeq C_2\times S_3$ in the case of $k=1$.

Suppose next that $k=2$. 
Since $h(x)$ is reducible, we have from Lemma \ref{Lem:HJSextic} that 
\begin{equation}\label{M(mu) k=2 C2 X S3}
M(\mu)=\mu^4-8B\mu-4AB=0, 
\end{equation} for some $\mu\in \Z$, or equivalently,
\begin{equation}\label{mu4}
 \mu^4=4B(2\mu+A). 
\end{equation} Note that $\mu\ne 0$ since $AB\ne 0$. Observe from  \eqref{mu4} that $B\mid \mu$ since $B$ is squarefree. Thus, we can write $\mu=mB$, for some nonzero integer $m$.
Then, noting that $2\mid \mu$ from \eqref{mu4}, and solving for $A$ in \eqref{M(mu) k=2 C2 X S3}, we get 
\begin{align}\label{Solving for A k=2}
\nonumber A&=\frac{\mu(\mu^3-8B)}{4B}\\
\nonumber &=\frac{mB(m^3B^2-8)}{4}\\
&=\left(\frac{mB}{2}\right)\left(\frac{m^3B^2}{2}-4\right),
\end{align} 
where each factor in \eqref{Solving for A k=2} is an integer. 
Hence,
\begin{align}\label{newdelta k=2}
 \nonumber \delta&=4A^3+27B\\
 \nonumber &=\frac{B(m^6B^4-20m^3B^2+108)(m^3B^2-2)^2}{16}\\
  &=B\left(\frac{m^6B^4}{4}-5m^3B^2+27\right)\left(\frac{m^3B^2}{2}-1\right)^2,
 \end{align}
where each factor in \eqref{newdelta k=2} is an integer.

Let $d=m^3B^2/2-1$ from \eqref{newdelta k=2}. Since $f(x)$ is irreducible, we know that $d\ne 0$. We claim that $\abs{d}=1$. Suppose, by way of contradiction, that $\abs{d}>1$, and let $p$ be a prime divisor of $d$. Observe that $p\nmid mB$, which also implies that $p\ne 2$ since $2\mid mB$. Since $f(x)$ is monogenic and $p^2\mid \delta$ from \eqref{newdelta k=2}, it follows from condition \eqref{C5} of Theorem \ref{Thm:General} that $p\mid A$. Hence, 
\[p\mid (m^3B^2/2-4),\] from \eqref{Solving for A k=2}. Consequently, $p$ divides 
\[d-\left(\frac{m^3B^2}{2}-4\right)=3,\] so that $p=3$. Therefore, 
\begin{equation}\label{d k=2}
 \abs{d}=\abs{m^3B^2/2-1}=3^z, 
\end{equation} for some integer $z\ge 1$. If $z=1$ in \eqref{d k=2}, then it is easy to see that 
\[(m,B)\in \{(2,\pm 1), (-1,\pm 2)\}.\] The solutions $(m,B)=(2,\pm 1)$ yield the contradiction that $A=0$. The solutions $(m,B)=(-1,\pm 2)$ produce the pairs $(A,B)\in \{(-6,-2),(6,2)\}$, so that 
\[(A \mmod{9},\ B \mmod{9})\in \{(3,7), (6,2)\}.\] However, neither of these pairs is contained in $\RR_2$ from condition \eqref{C2} of Theorem \ref{Thm:General}, which contradicts the fact that $f(x)$ is monogenic. 

Hence, we may assume that $z\ge 2$ in \eqref{d k=2}. Then, $m^3B^2\equiv 2 \pmod{9}$, which implies that 
\begin{equation}\label{mB mod 9 list k=2}
(m \mmod{9}, \ B \mmod{9})\in \{(2, 4), (2, 5), (5, 4), (5, 5), (8, 4), (8, 5)\}.
\end{equation} It follows from \eqref{mB mod 9 list k=2} and \eqref{Solving for A k=2} that
\[(A\mmod {9}, \ B\mmod{9})\in \{(3,5), (6,4)\},\] and neither of these pairs is an element of $\RR_2$ from condition \eqref{C2} of Theorem \ref{Thm:General}, which again contradicts the fact that $f(x)$ is monogenic. Thus, we have established the claim that $\abs{d}=1$, so that $z=0$ in \eqref{d k=2}.  

The only solutions to $\abs{d}=1$ are easily found to be $(m,B)\in \{(1,\pm 2)\}$, and these solutions lead to the pairs $(A,B)\in \{(2,-2), (-2,2)\}$. Using Theorem \ref{Thm:AJTri} and Theorem \ref{Thm:General} (or a computer algebra system), it is straightforward to verify that $f(x)$ is monogenic with $\Gal(f)\simeq C_2\times S_3$, for each of these pairs $(A,B)$, which completes the proof of this case.

\subsection*{{\bf The Case} $\mathbf{A_4}$}
Assume that $f(x)$ is monogenic with $\Gal(f)\simeq A_4$.
By Theorem \ref{Thm:AJTri}, we have:
\begin{equation*}\label{AJ conditions A_4}
-B \quad \mbox{and} \quad \Delta(g) \quad \mbox{are squares,} \quad \mbox{and} \quad h(x)\quad \mbox{is irreducible.}
\end{equation*}
Then, by Corollary \ref{Cor:B squarefree}, it follows that $B=-1$.  Thus, since 
\[\Delta(g)=-B^{k-1}\left(4A^3+27B^{3-k}\right)=(-1)^k\left(4A^3+27(-1)^{3-k}\right)=(-1)^k4A^3-27\] is a square, we need to find the integral points on the elliptic curve
\[E: \quad Y^2=X^3-2^43^3,\] where $X=(-1)^k4A$. Using Sage, we get $X=12$, which yields $A=-3$ when $k=1$, and $A=3$ when $k=2$. It is easy to confirm that \[(k,A,B)\in \{(1,-3,-1), (2,3,-1)\}\] produce monogenic even sextic trinomials $f(x)$ with $\Gal(f)\simeq A_4$.

\subsection*{{\bf The Case} $\mathbf{C_2 \times A_4}$}
Suppose first that $f(x)$ is monogenic with $\Gal(f)\simeq C_2\times A_4$. Then 
\begin{equation}\label{AJ conditions C2 X A4}
-B \ \mbox{is not a square,} \quad \Delta(g) \ \mbox{is a square,} \quad \mbox{and} \quad h(x) \ \mbox{is irreducible,}
\end{equation} by Theorem \ref{Thm:AJTri}. Regardless of the value of $k$, we need to show that
 \begin{equation}\label{Gen conditions C2 X A4}
 2\nmid AB,\quad B\ne -1 \ \mbox{is squarefree}\quad \mbox{and} \quad \rad(\abs{\delta})\mid A.
 \end{equation}

To show that $2\nmid AB$, we proceed by way of contradiction.  Suppose first that $2\mid A$. Since $B$ is squarefree, we have that 
\[B\mmod{32}\not \in \{0,4,8,12,16,20,24,28\}.\] Consequently,    
\[\Delta(g) \mmod{32}\in \{5, 13, 20, 21, 29\},\] regardless of the value of $k$, which contradicts the fact that $\Delta(g)$ is a square, since the squares modulo 32 are $\{0, 1, 4, 9, 16, 17, 25\}$. Hence, $2\nmid A$. Suppose that $2\mid B$. Then, since $f(x)$ is monogenic, we have from condition \eqref{C3} of Theorem \ref{Thm:General} that $(A\mmod{4},B\mmod{4})=(3,2)$. But then 
\[\Delta(g)\equiv \left\{\begin{array}{cl}
8 \pmod{16} & \mbox{if $k=1$};\\
12 \pmod{16} & \mbox{if $k=2$.}
\end{array} \right.\] which contradicts the fact that $\Delta(g)$ is a square. Therefore, $2\nmid B$, and the claim that $2\nmid AB$ is established. 

Since $f(x)$ is monogenic, we have from Corollary \ref{Cor:B squarefree} that $B$ is squarefree. Since $-B$ is not a square, we deduce that $B\ne -1$.

We turn next to showing that $\rad(\abs{\delta})\mid A$.  
Since 
\begin{equation*}\label{Gen delta}
\Delta(g)=-B^{k-1}\delta=-B^{k-1}(4A^3+27B^{3-k})
\end{equation*} is a square, we deduce from condition \eqref{C5} of Theorem \ref{Thm:General} that $\rad(\abs{\delta})\mid 2AB$. Let $p$ be a prime divisor of $\delta$. Since $2\nmid B$, it follows that $p\ne 2$. Therefore, $p\mid A$ or $p\mid B$, with $p\ge 3$. If $p\nmid A$, then $p\mid B$, which yields the contradiction that $p\nmid \delta$. Thus, $p\mid A$, which completes the proof that conditions \eqref{Gen conditions C2 X A4} hold, regardless of the value of $k$.

 Moving forward, we split our analysis into the two cases $k\in \{1,2\}$, as described in \eqref{k=1} and \eqref{k=2}. Consider first the case $k=1$. Suppose that $3\mid \Delta(f)$. If $3\nmid A$,  then $3\mid B$, and we see from condition \eqref{C3} of Theorem \ref{Thm:General} that $3\nmid \left[\Z_K:\Z[\theta]\right]$. Suppose then that $3\mid A$. We claim that $3\nmid B$. Assume, to the contrary, that $3\mid B$, and let 
 \[Z:=4(A/3)^3+B^2\in \Z.\] Then, since $\Delta(g)=-3^3Z$ is a square, we have that $3\mid Z$, which implies that $3\mid (A/3)$. Hence, $3^2\mid \mid Z$ since $3\mid \mid B$, so that $3^5\mid \mid \Delta(g)$, which contradicts the fact that $\Delta(g)$ is a square. Thus, $3\nmid B$, and it is easy to verify that 
\[(A \mmod{9}, \ B \mmod{9})\in \{(6,1),(6,4),(6,5),(6,8)\},\] from condition \eqref{C2} of Theorem \ref{Thm:General}. 

Next, suppose that $p$ is a prime divisor of $\Delta(f)$ with $p\ge 5$. If $p\mid\delta$, then $p\mid A$  since $\rad(\abs{\delta})\mid A$, which implies that $p\mid B$. Since $B$ is squarefree, we see that condition \eqref{C1} of Theorem \ref{Thm:General} is satisfied with no further restrictions on $A$ and $B$.  If $p\mid B$ and $p\nmid \delta$, then $p\nmid A$ so that $p\nmid \left[\Z_K:\Z[\theta]\right]$ from condition \eqref{C3} of Theorem \ref{Thm:General}, which concludes our analysis of the case $k=1$. 

Consider now the case $k=2$. Since $\Delta(g)=-B(4A^3+27B)$ is a square, $2\nmid B$ and $B$ is squarefree, it follows that $B\mid \delta$. Thus, $B\mid A$ and 
\begin{equation}\label{del/B}
-\delta/B=-\left(4A^2(A/B)+27\right)
\end{equation} 
is a square, since $\Delta(g)=-B\delta=B^2(-\delta/B)$
is a square. Let $p$ be a prime divisor of $\delta/B$. Then $p\mid A$, since $\rad(\abs{\delta})\mid A$ from \eqref{Gen conditions C2 X A4}. Thus, $p=3$ from \eqref{del/B} and 
\[-\delta/B=-\left(4A^2(A/B)+27\right)=3^{2n},\]
for some integer $n\ge 0$, or equivalently, 
\begin{equation}\label{equation1}
A^2(A/B)=\frac{3^{2n}+3^3}{-4}. 
\end{equation} For values of $n\in \{0,1,2\}$, it is then straightforward to determine all solutions of equation \eqref{equation1}, which are given in Table \ref{T4}. 
\begin{table}[h]
 \begin{center}
\begin{tabular}{c|ccc}
  $n$ &  0 & 1 & 2\\ [.25em]
 Solutions & none & $(A,B)\in \{(-3,3), (3,-3)\}$ & $(A,B)=(-3,1)$.
  \end{tabular}
\end{center}
\caption{Solutions to Equation \eqref{equation1} for $n\in \{0,1,2\}$}
 \label{T4}
\end{table} 
  
For $n\ge 3$, we rewrite equation \eqref{equation1} as
\begin{equation}\label{equation2}
  A^2(A/B)=3^3\left(\frac{3^{2n-3}+1}{-4}\right),
\end{equation} from which we deduce that $3\mid \mid A$ and $3\nmid B$.  Consequently, $3B\mid A$ and, rewriting equation \eqref{equation2}, we get 
\begin{equation}\label{equation3}
  (2B)^2\left(\frac{-A}{3B}\right)^3=3^{2n-3}+1 \equiv 1 \pmod{9}, 
\end{equation} 
since $n\ge 3$.
Since $f(x)$ is monogenic, it follows from condition \eqref{C2} of Theorem \ref{Thm:General} that 
\begin{align}\label{A mod 9, B mod 9}
\begin{split}
 (A\mmod{9}, \ B\mmod{9})&\in \{(3,1),(3,2),(3,8),(3,4),\\
 & \qquad (6,1),(6,5),(6,8),(6,7)\}. 
 \end{split}
\end{align} However, a simple computation reveals that 
\[(2B)^2\left(\frac{-A}{3B}\right)^3\mmod{9}\in \{4,5,7,8\},\] for each of the pairs $(A \mmod{9},\ B\mmod{9})$ in \eqref{A mod 9, B mod 9}, contradicting \eqref{equation3}. This contradiction shows that no  monogenic trinomials $f(x)$ with $\Gal(f)\simeq C_2\times A_4$ exist when $k=2$ and $n\ge 3$, which completes the proof in this direction.

Conversely, assume that conditions \eqref{Gen conditions C2 X A4} hold. When $k=2$, the fact that $f(x)$, where $(A,B)\in \{(-3,1),(-3,3),(3,-3)\}$, is monogenic with $\Gal(f)\simeq C_2 \times A_4$ can be confirmed easily from conditions \eqref{Gen conditions C2 X A4} and Theorem \ref{Thm:General}, or simply using a computer algebra system on the three trinomials,  and we omit the details. 

So, suppose that $k=1$ and let $f(x)\in \FF_1$. We show first that $\Gal(f)\simeq C_2 \times A_4$ by verifying that conditions \eqref{AJ conditions C2 X A4} hold. Observe that $-B$ cannot be a square, since $B$ is squarefree with $B\ne -1$. Thus, the first condition of \eqref{AJ conditions C2 X A4} is satisfied for $f(x)$.

We show next that the second condition in \eqref{AJ conditions C2 X A4} holds; that is, that $\Delta(g)$ is a square. If $3\mid A$, then 
\[(A \mmod{9}, \ B \mmod{9})\in \{(6,1),(6,4),(6,5),(6,8)\}.\] Hence, in this case we have 
\[\Delta(g)/(-3^3)=\delta/3^3=4(A/3)^3+B^2\equiv \left\{\begin{array}{cl}
 6 & \mbox{if $B\mmod{9}\in \{1,8\}$}\\
 3 & \mbox{if $B\mmod{9}\in \{4,5\}$,}
\end{array}\right.\] so that $3^4\mid \mid \Delta(g)$. Observe that if $3\nmid A$, then $3\nmid \Delta(g)$. Suppose next, regardless of whether $3\mid A$, that $p\ge 5$ is a prime divisor of $\Delta(g)=-\delta$. Since $\rad(\abs{\delta})\mid A$, 
it follows that $p\mid A$ and $p\mid B$. Then $p^2\mid\mid B^2$ since $B$ is squarefree, and therefore, 
\[\frac{\delta}{p^2}=\frac{4A^3+27B^2}{p^2}=4A(A/p)^2+27(B/p)^2\equiv 27(B/p)^2 \not \equiv 0 \pmod{p}.\] Thus, $p^2\mid \mid \Delta(g)$. Consequently,      
\begin{equation}\label{omega}
\abs{\Delta(g)}=\abs{\delta}=\abs{4A^3+27B^2}=\left\{\begin{array}{cl}
  3^4\prod_ip_i^2 & \mbox{if $3\mid A$,}\\[.5em]
  \prod_ip_i^2 & \mbox{if $3\nmid A$,}
\end{array}\right.
\end{equation} where the $p_i$ are the distinct prime divisors of $\delta$ with $p_i\ge 5$. Since $2\nmid B$, then we see that $\delta\equiv 3 \pmod{4}$ so that $\delta$ cannot be a square,  and we deduce from \eqref{omega} that $\abs{\delta}=-\delta$. Hence,  
$\Delta(g)$ is a square, which confirms that  the second condition of \eqref{AJ conditions C2 X A4} is satisfied for $f(x)\in \FF_1$. 

To show that $h(x)=x^6-Ax^2-B^2$ is irreducible, which is the third condition of \eqref{AJ conditions C2 X A4}, we proceed by way of contradiction and assume that $h(x)$ is reducible. Then, with $M(x)$ as defined in \eqref{M}, we have that 
  \begin{equation}\label{M=0}
  M(\mu)=\mu^4-2A\mu^2-8B\mu+A^2=0
   \end{equation} for some $\mu\in \Z$, by Lemma \ref{Lem:HJSextic}. Observe from \eqref{M=0} that $2\nmid \mu$ since $2\nmid A$ from \eqref{Gen conditions C2 X A4}. Solving \eqref{M=0} for $A$, we get
\[A=\mu^2\pm 2\sqrt{2B\mu}\in \Z.\] Thus, $2B\mu$ is a nonzero square, which yields the contradiction that $2\mid B\mu$. Hence, $h(x)$ is irreducible and $\Gal(f)\simeq C_2 \times A_4$.
The fact that $f(x)$ is monogenic follows easily from conditions \eqref{Gen conditions C2 X A4} and Theorem \ref{Thm:General}, and we omit the details. 

\subsection*{{\bf The Case} $\mathbf{S_4^{+}}$}
Assume that $f(x)$ is monogenic with $\Gal(f)\simeq S_4^{+}$.
By Theorem \ref{Thm:AJTri}, we have:
\begin{equation}\label{AJ conditions S4+}
 -B \ \mbox{is a square,} \quad \Delta(g) \ \mbox{is not a square,} \quad \mbox{and} \quad h(x) \ \mbox{is irreducible.}
\end{equation}
Since $-B$ is a square and $B$ is squarefree by Corollary \ref{Cor:B squarefree}, it follows that $B=-1$. 
If $A=(-1)^k3$, then we see from Case $A_4$ that $\Gal(f)\simeq A_4$. Hence, $A\ne (-1)^k3$. If $4\mid A$, then $(A\mmod{4},\ B\mmod{4})=(0,3)$, which contradicts condition \eqref{C2} of Theorem \ref{Thm:General}. Similarly, if $9\mid A$, then $(A\mmod{9},\ B\mmod{9})=(0,8)\not \in \RR_k$, which also contradicts condition \eqref{C2} of Theorem \ref{Thm:General}. Hence, $4\nmid A$ and $9\nmid A$. The fact that $\delta/3^{\nu_3(\delta)}$ is squarefree follows from condition \eqref{C5} of Theorem \ref{Thm:General}.  
We conclude that $f(x)\in \FF_2$, which proves the first direction.

Conversely, suppose that $f(x)\in \FF_2$. We show first that $\Gal(f)\simeq S_4^{+}$ by verifying that conditions \eqref{AJ conditions S4+} hold. Clearly, $-B$ is a square since $B=-1$. If $\Delta(g)$ is a square, then 
\begin{equation}\label{elliptic}
Y^2=X^3-2^4 3^3,
\end{equation} 
where 
\[(X,Y)=\left((-1)^k4A, \ (-1)^k16\delta\right).\] Using Sage to find the integral points on the elliptic curve \eqref{elliptic} yields $X=12$, so that $A=(-1)^k3$, which gives $f(x)$ with $\Gal(f)\simeq A_4$. Hence, we conclude that $\Delta(g)$ is not a square. To see that $h(x)$ is irreducible, we assume that $h(x)$ is reducible and proceed by way of contradiction. Exactly the same argument used in the proof of Corollary \ref{Cor:ShortList} can be used here with $B=-1$ to achieve a contradiction. Thus, $\Gal(f)\simeq S_4^{+}$. The fact that $f(x)$ is monogenic can be deduced from Theorem \ref{Thm:General}, and we omit the details, since they are essentially outlined in the proof of the first direction.  
\subsection*{{\bf The Case} $\mathbf{S_4^{-}}$}
Assume that $f(x)$ is monogenic with $\Gal(f)\simeq S_4^{-}$.
By Theorem \ref{Thm:AJTri}, we have:
\begin{equation}\label{AJ conditions S4-}
\begin{gathered}
 \mbox{neither $-B$ nor $\Delta(g)$ is a square,} \quad -B\Delta(g) \ \mbox{is a square,}\\ \quad \mbox{and} \quad h(x) \ \mbox{is irreducible.}
\end{gathered}
\end{equation}
Recall that 
\begin{equation}\label{-BDelta(g)}
-B\Delta(g)=-B(-B^{k-1}(4A^3+27B^{3-k}))=B^k(4A^3+27B^{3-k}).
\end{equation}

Suppose first that $k=1$, so that 
\begin{equation}\label{-BDelta(g)k=1}
-B\Delta(g)=B(4A^3+27B^2).
\end{equation} Then, since $B$ is squarefree from Corollary \ref{Cor:B squarefree} and $-B\Delta(g)$ is a square from \eqref{AJ conditions S4-}, we see from \eqref{-BDelta(g)k=1} that $B\mid (4A^3+27B^2)$. Thus, 
\begin{equation}\label{divis}
A\equiv \left\{\begin{array}{ll}
  0 \pmod{B} & \mbox{if $2\nmid B$}\\[.5em]
  0 \pmod{(B/2)} & \mbox{if $2\mid B$.}
\end{array}\right.
\end{equation}

If $2\nmid B$, then we see from \eqref{-BDelta(g)k=1} and \eqref{divis} that   
 \[B^3(4B(A/B)^3+27)\] is a square. Since $B$ is squarefree, it follows that $B\mid (4B(A/B)^3+27)$, so that $B\mid 27$, and we deduce that $B\in \{\pm 1,\pm 3\}$.
 
 Similarly, if $2\mid B$, then $B\mid 2A$ from \eqref{divis}, and 
 \[B^3(A(2A/B)^2+27)\] is a square. Thus, $(B/2)\mid (A(2A/B)^2+27)$, which implies that $(B/2)\mid 27$, and consequently, $B\in \{\pm 2, \pm 6\}$.   

To determine the possible corresponding values of $A$ for each of these values of $B$, we use Sage to find all integral points on the elliptic curves:
\begin{align*}
 E_1^{\pm}: \quad Y^2&=X^3\pm 2^43^3 \quad \mbox{when $B=\pm 1$, with $X=\pm 4A$;}\\
 E_2^{\pm}: \quad Y^2&=X^3\pm 2^43^8 \quad \mbox{when $B=\pm 3$, with $X=\pm 12A$;}\\
 E_3^{\pm}: \quad Y^2&=X^3\pm 2^33^3 \quad \mbox{when $B=\pm 2$, with $X=\pm 2A$;}\\
 E_4^{\pm}: \quad Y^2&=X^3\pm 2^33^8 \quad \mbox{when $B=\pm 6$, with $X=\pm 6A$.}
\end{align*} 
The curves $E_1^{+}$ and $E_2^{-}$ contain no integral points. 
For the other curves, we provide the values of $X$ and the corresponding viable integer pairs $(A,B)$ in Table \ref{T5}. 
  \begin{table}[h]
 \begin{center}
\begin{tabular}{c|cccccc}
  Curve & $E_1^{-}$ & $E_2^{+}$ & $E_3^{+}$ & $E_3^{-}$ & $E_4^{+}$ & $E_4^{-}$\\ [.25em]
 $X$ &  12 & 0 & $-6$ & 6,10,33 & $-18$ & 54,1942\\ [.25em]
 $(A,B)$ & $(-3,-1)$ & $\varnothing$ & $(-3,2)$ & $(-3,-2)$,$(-5,-2)$ & $(-3,6)$  & $(-9,-6)$.
 \end{tabular}
\end{center}
\caption{Integral $X$-coordinates on $E_{i}^{\pm}$ and corresponding pairs $(A,B)$} 
 \label{T5}
\end{table}

\noindent
It is straightforward to verify that $(A,B)=(-9,-6)$ is the only pair in Table \ref{T5} such that $f(x)$ is monogenic with $\Gal(f)\simeq S_4^{-}$, which completes the proof when $k=1$.  

Suppose now that $k=2$. It follows from \eqref{AJ conditions S4-} and \eqref{-BDelta(g)} that 
\begin{equation}\label{-BDelta(g)k=2}
\delta=4A^3+27B \quad \mbox{is a square.} 
\end{equation} Using the same argument as given in Case $S_3$ when $k=2$, we arrive at the equation
\[B=3-4(A/3)^3,\] given in \eqref{z=2}. Observe that $3\mid A$ since $B\in \Z$. Also, $B\ne -1$ by \eqref{AJ conditions S4-}, $B$ is squarefree by Corollary \ref{Cor:B squarefree}, and $4\nmid A$ by condition \eqref{C2} of Theorem \ref{Thm:General}. 

Conversely, it is straightforward to verify that these conditions are sufficient for $f(x)$ to be monogenic with $\Gal(f)\simeq S_4^{-}$. 

\subsection*{{\bf The Case} $\mathbf{C_2 \times S_4}$} The necessity of these conditions is straightforward and we omit the details. To prove that these conditions are sufficient, we need to show that $-B\delta(g)$ is not a square and that $h(x)$ is irreducible. To see that $-B\delta(g)$ is not a square, we assume the contrary, and proceed using the arguments contained in the Case $S_3$. 
To prove that $h(x)$ is irreducible, recall from Lemma \ref{Lem:HJSextic} that the reducibility of $h(x)$ implies that $M(\mu)=0$ for some unique integer $\mu$. Then, we proceed by way of contradiction and apply the the arguments used in the Case $C_2\times S_3$ to complete the proof. 
\end{proof}

\section{The Proof of Corollary \ref{Cor:Main}}
\begin{proof}
  We first address item \eqref{I1:Cor}, and we begin with the family 
  \[\FF_1=\{f(x): 3\nmid A \ \mbox{or} \ (A \mmod{9}, \ B\mmod{9})\in \{(6,1),(6,4),(6,5),(6,8)\}\}.\] Recall, in this scenario we have $k=1$ such that 
  \begin{equation}\label{F1conditions}
  2\nmid AB,\quad B\ne -1 \ \mbox{and is squarefree,}\quad \rad(\delta)\mid A,
  \end{equation} where $f(x)=x^6+Ax^2+B\in \FF_1$ is monogenic with $\Gal(f)\simeq C_2\times A_4$. We provide two infinite single-parameter subfamilies $\T_1$ and $\T_2$ of $\FF_1$, such that all elements of each subfamily generate distinct sextic fields. We let $t$ denote an integer parameter for both examples.
  
  For our first example, define 
  \begin{align*}
  A&:=-(27t^2+27t+7), \quad  B:=(2t+1)A \quad \mbox{and}\\ 
 \T_1&:=\{f_t(x)=x^6+Ax^2+B: t\ge 0 \ \mbox{and} \ B\ \mbox{is squarefree}\}. 
 \end{align*}
  Let $f_t(x)\in \T_1$ and let $g_t(x)=x^3+Ax+B$. Note that $-A>1$ and $\gcd(2t+1,A)=1$. Then, since $B=(2t+1)A$ is squarefree, we have that $A$ is squarefree and so $f_t(x)$ is Eisenstein with respect to every prime divisor of $A$. Thus, $f_t(x)$ is irreducible over $\Q$. An easy computation shows that   
  \[\Delta(g_t)=-\delta=-(4A^3+27B^2)=A^2.\] Hence, $f_t(x)$ satisfies the conditions in \eqref{F1conditions}, and since $3\nmid A$, we see that $\T_1\subset \FF_1$. Since there exist infinitely many integers $t$ such that $(2t+1)(27t^2+27t+7)$ is squarefree \cite{BB}, it follows that $\T_1$ and consequently, $\FF_1$,  is infinite. We claim that the elements of $\T_1$ generate distinct sextic fields. For $i\in \{1,2\}$, let $f_{t_i}(x)\in \T_1$ with $t_1\ne t_2$, let  $f_{t_i}(\theta_i)=0$ and $K_i=\Q(\theta_i)$. Assume, by way of contradiction, that $K_1=K_2$. Then, $\Delta(K_1)=\Delta(K_2)$, and since $f_{t_i}(x)$ is monogenic, it follows from \eqref{Eq:Dis-Dis} that $\Delta(f_{t_1})=\Delta(f_{t_2})$. Calculating $\Delta(f_{t_i})$ using Corollary \eqref{Cor:Swan}, we get
  \[64(2t_1+1)(27t_1^2+27t_1+7)^5-64(2t_2+1)(27t_2^2+27t_2+7)^5=64(t_1-t_2)T=0,\]
     where $T>0$, which yields the contradiction that $t_1=t_2$. Thus, if $t_1\ne t_2$, then $\Delta(f_{t_1})\ne \Delta(f_{t_2})$, and the claim that each element of $\T_1$ generates a distinct sextic field follows from Corollary \ref{Cor:distinct}. 
      We point out that the argument used in this first example to show that the sextic fields are distinct is similar to the argument needed to establish the same claim in the examples that follow, and so we omit the details.
   
   For our second example, we define 
   \begin{align*}
  A&:=-3(9t^2-21t+13), \quad  B:=(6t-7)A \quad \mbox{and}\\ 
 \T_2&:=\{f_t(x)=x^6+Ax^2+B: t\ge 0 \ \mbox{and} \ B\ \mbox{is squarefree}\}. 
  \end{align*} Note, as in the example $\T_1$, $f_t(x)\in \T_2$ is irreducible over $\Q$ since $f_t(x)$ is Eisenstein with respect to every prime divisor of $A$, due to the fact that $A$ is squarefree and $\gcd(6t-7,3(9t^2-21t+13))=1$. Here, with $g_t(x)=x^3+Ax+B$, we have 
  \[\Delta(g_t)=-\delta=-(4A^3+27B^2)=9A^2,\] so that $\rad(\delta)\mid A$. Hence, $f_t(x)$ satisfies \eqref{F1conditions}. Since $(A \mmod{9}, \ B \mmod{9})=(6,1)$, it follows that $\T_2\subset \FF_1$. 
  
  We turn next to the family
  \[\FF_2=\{f(x): B=-1, \  A\ne (-1)^k3, \ 4\nmid A, \ 9\nmid A \ \mbox{and} \ \delta/3^{\nu_3(\delta)} \ \mbox{is squarefree}\},\] where $f(x)\in \FF_2$ is monogenic with $\Gal(f)\simeq S_4^{+}$. We focus on $k=1$ since that will suffice to establish what we claim. We let $s$ be an integer parameter, and define 
 \[
  \SSS:=\{f_s(x)=x^6+(6s+1)x^{2}-1: s\ge 0 \ \mbox{ and $\delta$ is squarefree}\}, 
 \] where $\delta=4(6s+1)^3+27$. It is easy to see that $\SSS \subset \FF_2$, and since there exist infinitely many positive integers $s$ such that $\delta$ is squarefree \cite{Erdos}, it follows that $\SSS$ is infinite.

 Finally, for item \eqref{I1:Cor}, we consider the family  
 \[\FF_3=\{f(x): 3\mid A, \ 4\nmid A, \ B\ne -1, \ B=3-4(A/3)^3 \ \mbox{is squarefree}\},\]
 where $f(x)=x^6+Ax^4+B$ is monogenic with $\Gal(f)\simeq S_4^{-}$. We let $u$ be an integer parameter, and define 
 \begin{align*}
  A&:=6u+3, \quad  B:=3-4(2u+1)^3 \quad \mbox{and}\\ 
 \U&:=\{f_u(x)=x^6+Ax^4+B: u\ge 1 \ \mbox{and} \ B\ \mbox{is squarefree}\}. 
 \end{align*} Note that $B\ne -1$ since $u\ne 0$. It is clear that $\U\subset \FF_3$ and that $\U$ is infinite since there exist infinitely many positive integers $u$ such that $B$ is squarefree \cite{Erdos}. 
 
 We turn now to item \eqref{I2:Cor}. Let $v$ be an integer parameter. 
 
 Suppose that $k=1$, and define 
 \begin{align*}
  A&:=6v+1, \quad  B:=1 \quad \mbox{and}\\ 
 \V_1&:=\{f_v(x)=x^6+Ax^2+B: v\ge 0 \ \mbox{and} \ \delta\ \mbox{is squarefree}\}, 
 \end{align*} 
 where $\delta=4(6v+1)^3+27$. Note that $-\delta$ cannot be a square since $\delta>0$.  Then $f_v(x)\in \V_1$ is monogenic with $\Gal(f_v)\simeq C_2\times S_4$, and $\V_1$ is infinite since there exist infinitely many positive integers $v$ such that $\delta$ is squarefree \cite{Erdos}. 
 
 Suppose that $k=2$, and define 
 \begin{align*}
  A&:=36v+12, \quad  B:=1 \quad \mbox{and}\\ 
 \V_2&:=\{f_v(x)=x^6+Ax^4+B: v\ge 0 \ \mbox{and} \ \delta/\prod_{p\mid 2AB}p^{\nu_p(\delta)} \ \mbox{is squarefree}\}, 
 \end{align*} 
 where 
 \[\delta=4(36v+12)^3+27=3^3\left(2^8(3v+1)^3+1\right).\] Thus,
 \[\delta/\prod_{p\mid 2AB}p^{\nu_p(\delta)}=\delta/3^3=2^8(3v+1)^3+1,\] and since there exist infinitely many positive integers $v$ such that $2^8(3v+1)^3+1$ is squarefree \cite{Erdos}, we conclude that $\V_2$ is infinite. Since $(A \mmod{9}, \ B\mmod{9})=(3,1)$, it follows that $f_v(x)\in \V_2$ is monogenic with $\Gal(f_v)\simeq C_2\times S_4$.  
 \end{proof}
 
 \begin{rem}
   The infinite families given in the proof of Corollary \ref{Cor:Main} are not exhaustive, in the sense that other such single-parameter infinite families can be constructed in a similar manner. 
 \end{rem}








\end{document}